\DeclareMathAlphabet{\varmathbb}{U}{pxsyb}{m}{n}
\def\leq{\leqslant}
\def\geq{\geqslant}
\def\phi{\varphi}
\def\bar{\overline}
\def\kappa{\varkappa}
\newcommand{\D}{\mathrm{d}\kern0.2pt}%
\newcommand{\E}{\mathrm{e}\kern0.2pt} 
\newcommand{\ii}{\kern0.05em\mathrm{i}\kern0.05em}
\newcommand{\RR}{\mathbb{R}}%
\newtheorem{theorem}{\bf \indent Theorem}[section]
\newtheorem{proposition}{\bf \indent Proposition}[section]
\newtheorem{lemma}{\bf \indent Lemma}[section]
\newtheorem{corollary}{\bf \indent Corollary}[section]
\theoremstyle{remark}
\newtheorem{definition}{\bf\indent Definition}[section] 
\newtheorem{conjecture}{\bf \indent Conjecture}[section]
\numberwithin{equation}{section}
\begin{document}

\noindent {\Large \bf On analytic characterization of convex sets in
$\mathbf{\RR^m}$ (a survey)}

\vskip5mm

{\bf Nikolay Kuznetsov}

\vskip-2pt {\small Laboratory for Mathematical Modelling of Wave Phenomena}
\vskip-4pt {\small Institute for Problems in Mechanical Engineering} \vskip-4pt
{\small Russian Academy of Sciences} \vskip-4pt
{\small V.O., Bol'shoy pr. 61, St Petersburg 199178, Russia \vskip-4pt {\small
nikolay.g.kuznetsov@gmail.com}

\vskip7mm

\parbox{144mm} {\noindent In the first part of this note, we review results
concerning analytic characterization of convexity for planar sets. The second part
is devoted to results valid for arbitrary $m \geq 2$.}

\vskip6mm

{\centering \section{Introduction} }

\noindent The notion of a convex set was introduced by Hermann Minkowski at the
close of the 19th century. In 1897--1903, he published four papers treating
different aspects of convex sets (see \cite[p.~224]{Val} for the corresponding
references), and his work initiated the modern theory of convexity. Minkowski's
definition (it is given in his first paper \cite[p.~198]{M}, where these sets are
studied on their own), is equivalent to the common, modern one based on an intrinsic
property of convex sets (cf. \cite[Definition~1.3]{Val}):

\begin{definition}
A set $S \subset \RR^m$ is convex if with every two points $x, y$ belonging to $S$
the entire segment joining $x$ and $y$ lies in $S$.
\end{definition}

It is amazing that Motzkin's theorem that characterizes the convexity in another
natural way---in terms of the nearest point property---appeared only in 1935.

\begin{theorem}[Motzkin, \cite{Mo}; see also \cite{Val}, Theorem 7.8]
Let $S$ be a closed set in $\RR^m$. This set is convex if and only if a unique
nearest point in $S$ corresponds to every point in $\RR^m$.
\end{theorem}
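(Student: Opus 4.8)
The statement is an equivalence, so the plan is to prove the two implications separately, with essentially all of the difficulty concentrated in the direction ``unique nearest point property $\Rightarrow$ convex.''

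\emph{Convexity implies the unique nearest point property.} Assume $S$ is closed, convex and (to exclude a trivial exception) nonempty, and fix $p\in\RR^m$. For existence I would intersect $S$ with a large closed ball $\overline B(p,R)$ meeting $S$; this intersection is compact, so the continuous map $s\mapsto|p-s|$ attains its minimum $d=d(p,S)$ there, hence on $S$. For uniqueness I would use the strict convexity of the Euclidean norm: if $q_1,q_2\in S$ both realise $d$, then $(q_1+q_2)/2\in S$ by convexity, and the parallelogram identity gives $|p-(q_1+q_2)/2|^2=d^2-|q_1-q_2|^2/4$. If $q_1\neq q_2$ this yields a point of $S$ strictly nearer to $p$ than $d$, a contradiction; hence $q_1=q_2$.

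\emph{The unique nearest point property implies convexity.} Let $P\colon\RR^m\to S$ send each point to its unique nearest point. First I would record two preparatory facts. (i) $P$ is continuous: if $p_n\to p$, then $|p_n-P(p_n)|=d(p_n,S)\to d(p,S)$ since the distance function is $1$-Lipschitz, so $(P(p_n))$ is bounded, and any accumulation point lies in the closed set $S$ and realises $d(p,S)$, hence equals $P(p)$ by uniqueness. (ii) \emph{Segment lemma}: if $q=P(x)$, then $P(y)=q$ for every $y$ on the segment $[q,x]$, because the open ball $B(y,|y-q|)$ is contained in $B(x,|x-q|)$ and the two spheres are tangent at $q$, so $\overline B(y,|y-q|)$ also meets $S$ only at $q$. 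The heart of the matter is then the \emph{ray lemma}: for $x\notin S$ with $q=P(x)$ and $u=(x-q)/|x-q|$, one has $P(q+tu)=q$ for \emph{all} $t\ge0$, not only for $t\le|x-q|$. Granting this, convexity follows quickly, since letting $t\to\infty$ the disjointness of each open ball $B(q+tu,t)$ from $S$ forces the whole open half-space $\{y:\langle y-q,u\rangle>0\}$ to miss $S$; thus every $x\notin S$ is separated from $S$ by a closed half-space containing $S$, so $S$ equals the intersection of its supporting half-spaces and is convex.

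I expect the ray lemma to be the main obstacle: the segment lemma delivers the tangency of the distance spheres to $S$ only up to $t=|x-q|$, and propagating it past the first value $\tau$ at which it might fail demands a genuine argument. Setting $z=q+\tau u$ (so $P(z)=q$ and $d(z,S)=\tau$ by continuity) and examining $z+\varepsilon u$, one must show through a first-order tangency estimate that its projection cannot leave $q$, i.e.\ that $P(z+\varepsilon u)\to q$ quickly enough to contradict minimality at $z$. A convenient reformulation isolating the same difficulty is to prove instead that $P$ is \emph{nonexpansive}, $|P(x)-P(y)|\le|x-y|$: given that, for $s_0,s_1\in S$ and $s_t=(1-t)s_0+ts_1$ the triangle inequality forces $|P(s_t)-s_0|+|P(s_t)-s_1|\le|s_1-s_0|$, whence equality, $P(s_t)=s_t\in S$, and convexity. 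Either route reduces to the one finite-dimensional fact---that the tangency of the distance spheres to $S$ persists along the entire ray---which is precisely what can fail for non-convex Chebyshev sets in infinite dimensions.
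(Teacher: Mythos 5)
The paper itself states this theorem without proof---it is quoted from Motzkin \cite{Mo} and Valentine \cite{Val}---so your argument has to stand entirely on its own, and it does not: the one step that carries the whole content of the theorem is announced rather than proved. What you do prove is correct. The easy direction (compactness for existence, parallelogram law for uniqueness) is right; the continuity of the projection $P$ is right; the segment lemma via internal tangency of balls is right; and the deduction of convexity from the ray lemma is right, since the union of the balls $B(q+tu,t)$ is the open half-space $\{y:\langle y-q,u\rangle>0\}$, so every $x\notin S$ is cut off by a closed half-space containing $S$. But the ray lemma itself is left as an acknowledged obstacle (``demands a genuine argument,'' ``one must show\dots''), and the alternative reduction to nonexpansiveness of $P$ is not an escape route: nonexpansiveness of the metric projection of a Chebyshev set is equivalent to the convexity of that set, so it is a restatement of the theorem, not a quotable lemma. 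A ``first-order tangency estimate'' cannot close this: such an estimate only confines $P(z+\varepsilon u)$ to a thin lune near $q$, which does not force $P(z+\varepsilon u)=q$; and the step is genuinely non-elementary, since it is exactly what fails for non-convex Chebyshev sets in infinite-dimensional (incomplete) inner-product spaces. Some finite-dimensional compactness input must be injected.

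The classical way to fill the gap is Brouwer's fixed point theorem (Klee's argument), and your segment lemma plus continuity of $P$ are precisely the ingredients it needs. Suppose $S$ is not convex; pick $a,b\in S$ whose midpoint $c$ lies outside $S$, put $\rho=d(c,S)>0$, and for large $R$ consider the continuous self-map of the compact convex ball $\overline{B}(c,R)$ given by $w\mapsto c+R\,(c-P(w))/|c-P(w)|$ (well defined since $c\notin S$, continuous since $P$ is). A Brouwer fixed point $w^*$ lies at distance $R$ from $c$ on the ray from $P(w^*)$ through $c$, so $c\in[P(w^*),w^*]$; your segment lemma then gives $P(c)=P(w^*)$, whence $d(w^*,S)=|w^*-c|+|c-P(c)|=R+\rho$. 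On the other hand, writing $v=(w^*-c)/R$ and using $b-c=-(a-c)$, one of the two distances $|w^*-a|$, $|w^*-b|$ has square at most $R^2+|a-b|^2/4$, so $(R+\rho)^2\le R^2+|a-b|^2/4$, which is false for $R$ large. (Asplund's convex-analysis proof, via the convex function $\tfrac12|x|^2-\tfrac12 d(x,S)^2$, which is differentiable with gradient $P(x)$ when $S$ is Chebyshev, is the other standard option.) Without one of these tools, the hard direction of your proposal remains unproved.
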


In the 1950s, this result was generalized by using $S_z$---the set of all points in
$\RR^m$ having $z$ as a nearest point in $S$. Namely, the following characterization
of convexity is valid.

\begin{theorem}[Phelps, \cite{P}]
Let $S$ be a closed set in $\RR^m$. This set is convex if and only if $S_z$ is a
closed cone in $\RR^m$ with vertex $z$.
\end{theorem}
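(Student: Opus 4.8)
The plan is to reduce everything to one observation: for a fixed $z \in S$, the requirement that $S_z$ be a cone with vertex $z$ is equivalent to the \emph{obtuse-angle condition}
\[
\langle x - z,\, s - z \rangle \le 0 \quad \text{for all } s \in S
\]
holding for every $x \in S_z$. Indeed, $x \in S_z$ means $|x - z| \le |x - s|$ for all $s \in S$; writing $w_t = z + t(x-z)$ and expanding $|w_t - s|^2 - |w_t - z|^2 = |s-z|^2 - 2t\,\langle x - z,\, s - z\rangle$, one sees that $w_t \in S_z$ for all $t \ge 0$ precisely when the displayed inequality holds, since the coefficient of $t$ must stay nonnegative for arbitrarily large $t$. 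Thus the cone property at $z$ is nothing but the statement that every $x \in S_z$ sees all of $S$ under a non-acute angle from $z$. Closedness of $S_z$ is automatic throughout: if $x_n \in S_z$ and $x_n \to x$, passing to the limit in $|x_n - z| \le |x_n - s|$ gives $x \in S_z$.

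For the direct implication, I would assume $S$ convex, fix $z \in S$ and $x \in S_z$, and first record the classical variational characterization of the metric projection onto a convex set: $z$ is the nearest point of $S$ to $x$ if and only if the obtuse-angle condition holds. The ``if'' part is immediate from the expansion above, while the ``only if'' part uses that $z + \lambda(s-z) \in S$ for $\lambda \in (0,1]$ by convexity and lets $\lambda \to 0^+$. Since $x \in S_z$ supplies this condition, and since it is invariant under replacing $x - z$ by $t(x-z)$ with $t \ge 0$, every $w_t$ satisfies it as well; hence $w_t \in S_z$, and $S_z$ is a closed cone with vertex $z$.

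For the converse, I would assume $S_z$ is a closed cone with vertex $z$ for every $z \in S$, invoke Motzkin's theorem, and show that each $x_0 \in \RR^m$ has a \emph{unique} nearest point in $S$. Existence is the usual compactness argument for the closed set $S$. For uniqueness, suppose $x_0$ had two distinct nearest points $z_1 \ne z_2$. The cone hypothesis, via the equivalence above, gives $\langle x_0 - z_1,\, s - z_1\rangle \le 0$ and $\langle x_0 - z_2,\, s - z_2\rangle \le 0$ for all $s \in S$. Taking $s = z_2$ in the first and $s = z_1$ in the second and setting $u = z_2 - z_1$ yields $\langle x_0 - z_1,\, u\rangle \le 0 \le \langle x_0 - z_2,\, u\rangle$, whence $\langle z_1 - z_2,\, u \rangle \ge 0$, that is $-|u|^2 \ge 0$ --- contradicting $z_1 \ne z_2$. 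So the nearest point is unique and $S$ is convex by Motzkin's theorem.

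The genuine content sits entirely in the opening equivalence; once the cone property is translated into the obtuse-angle condition, both directions are short. I therefore expect the main obstacle to be chiefly expository: stating the metric-projection characterization cleanly, and being careful that the quantifier ``for every $z \in S$'' is what the converse actually uses --- the two-point uniqueness argument invokes the condition at two different vertices $z_1, z_2$ simultaneously, so the single-vertex version would not suffice.
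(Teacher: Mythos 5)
The paper does not actually prove this theorem: it is quoted from Phelps \cite{P} as background material in the survey's introduction, so there is no in-paper argument to compare yours against, and your proposal has to be judged on its own merits. Judged so, it is correct and complete. The pivotal step---that $S_z$ being a cone with vertex $z$ is equivalent to the obtuse-angle condition $\langle x-z,\,s-z\rangle \le 0$ for all $s \in S$ and all $x \in S_z$---is established correctly by the expansion $|w_t-s|^2-|w_t-z|^2=|s-z|^2-2t\langle x-z,\,s-z\rangle$, and both implications then go through as you describe: the forward one via the standard variational characterization of the metric projection onto a convex set, and the converse by verifying the unique-nearest-point hypothesis of Motzkin's theorem. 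Invoking Motzkin's theorem is legitimate here, since the paper states it (Theorem 1.1) as an independent prior result and frames Phelps' theorem as its generalization; this is surely also the intended logical order. Your closing caveat is exactly the right thing to flag: the uniqueness argument uses the cone hypothesis at two distinct vertices $z_1$ and $z_2$ simultaneously, so the theorem must be read with the implicit quantifier ``for every $z \in S$,'' which the paper's one-line statement leaves unsaid. The only omissions are trivial housekeeping: $S$ must be nonempty for the existence half of the nearest-point argument, and one should note (as you implicitly do) that $z \in S_z$ always, so the cones in question are never empty.
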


The following characterization of convexity was obtained in 1976. It involves the
Steiner sym\-metrization of a set (see, for example, \cite[Definition~12.7]{Val}),
and its converse is also true; see the monograph \cite{E}.

\begin{theorem}[Falconer, \cite{Fa}]
Let $S \subset \RR^m$ be a compact set of positive Lebesgue measure. If the Steiner
symmetrizations of this set about all hyperplanes are convex, then $S$ is convex.
\end{theorem}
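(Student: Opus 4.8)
The plan is to recast the hypothesis analytically and then argue by contradiction. Fix a unit direction $u$ and write each point of $\RR^m$ as $x = w + t\,u$ with $w \in u^{\perp}$ and $t \in \RR$. Setting $f_u(w) = |\{t \in \RR : w + t u \in S\}|$ for the one-dimensional measure of the section, the Steiner symmetral about the hyperplane $u^{\perp}$ is $S^{*}_u = \{\, w + t u : |t| \le \tfrac12 f_u(w)\,\}$. A direct check shows that $S^{*}_u$ is convex if and only if $f_u$ is concave on a convex support (the upper and lower boundaries $t = \pm\tfrac12 f_u(w)$ are then concave, resp.\ convex, graphs). Thus the hypothesis is equivalent to the statement that $f_u$ is concave for \emph{every} direction $u$, and I must deduce that $S$ is convex.

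First I would reduce the problem to the volume equality $|S| = |K|$, where $K = \mathrm{conv}\,S$. Since $S$ has positive measure, $K$ is a convex body, and $S \subseteq K$ gives $|S| \le |K|$. If $|S| = |K|$ then $S = K$: otherwise $K \setminus S$ is nonempty and, being relatively open in $K$ (as $S$ is closed), it must meet $\mathrm{int}\,K$, for if $K \setminus S \subseteq \partial K$ then $\mathrm{int}\,K \subseteq S$ and the closedness of $S$ forces $S \supseteq \overline{\mathrm{int}\,K} = K$; a nonempty relatively open subset of $K$ meeting the interior has positive measure, contradicting $|K \setminus S| = 0$. Hence it suffices to rule out $|S| < |K|$ under the standing hypothesis.

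Suppose then that $|S| < |K|$. The argument above furnishes an interior point $q \in K \setminus S$, and since $S$ is closed there is an open ball $B(q,\rho) \subseteq \mathrm{int}\,K \setminus S$, a genuine $m$-dimensional hole in $S$ inside its convex hull. I would use this hole to exhibit a direction $u$ for which $f_u$ is not concave, contradicting the hypothesis. The mechanism is as follows: if $g_u(w)$ is the chord length of $K$ in direction $u$ (concave in $w$ because $K$ is convex) and $h_u(w) = g_u(w) - f_u(w) \ge 0$ is the length removed from the section by $K \setminus S$, then $f_u = g_u - h_u$, and a ball-shaped hole contributes to $h_u$ the term $2\sqrt{\rho^2 - |w - w_0|^2}$ near the projection $w_0$ of the ball's centre. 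This term is strongly concave, its second derivative along a line through $w_0$ blowing up like $-2/\rho$; subtracting it from the fixed-curvature profile $g_u$ forces $f_u$ to be locally strictly convex. The cleanest formulation of the target is purely two-dimensional: it suffices to produce one affine $2$-plane $\Pi$ and three equally spaced parallel chords in $\Pi$ whose middle $S$-length is smaller than the average of the outer two, since this three-chord inequality is exactly the failure of midpoint-concavity of $f_u$ along the line in $u^{\perp}$ spanned by the chord offsets.

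The main obstacle is that $f_u(w)$ is a \emph{global} integral over the whole section, so portions of $\RR^m \setminus S$ lying at the same level $w$ but far from the ball also contribute to $h_u$ and could, a priori, add a competing convex term that restores concavity. The heart of the proof is therefore a \emph{localization}: the hole and the direction $u$ must be chosen so that, near the critical slice, the selected ball dominates the section-length defect. I expect this to be where the real work lies, and I would attack it by taking $\rho$ small relative to the local geometry—so that the unbounded chord-curvature $2/\rho$ overwhelms any bounded competing contribution—by choosing $q$ to be a suitably exposed point of the complement for which one side of the slice is solidly in $S$, or, failing a clean pointwise estimate, by integrating the concavity defect of $f_u$ over all $u \in S^{m-1}$ and showing this average is strictly positive whenever $|K \setminus S| > 0$, which already forces $f_u$ to be non-concave for some $u$.
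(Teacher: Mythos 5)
The survey you are writing into contains no proof of this theorem---it is stated as Theorem~1.3 with a citation to Falconer's paper \cite{Fa}---so your proposal has to be judged on its own terms, and on those terms it is not a proof. The parts you actually carry out are correct: the equivalence between convexity of the Steiner symmetral $S^{*}_u$ and concavity of $f_u$ on a convex projection; the purely topological reduction showing that, for closed $S$ of positive measure, either $S = K = \mathrm{conv}\,S$ or else $\mathrm{int}\,K \setminus S$ contains an open ball $B(q,\rho)$; and the remark that failure of concavity of some $f_u$ can be witnessed by three equally spaced parallel chords in a $2$-plane (using that $f_u$ is measurable, so midpoint concavity suffices). But the entire analytic core is missing: you never prove that the ball $B(q,\rho) \subseteq \mathrm{int}\,K \setminus S$ forces $f_u$ to be non-concave for \emph{some} direction $u$. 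You state this as the goal, describe the mechanism you hope will produce it, and then explicitly defer it (``I expect this to be where the real work lies''). That deferred step \emph{is} the theorem; everything established before it is routine.

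The heuristics you offer for closing the gap also do not obviously work. Taking $\rho$ small does not help in the way you suggest: the competing contribution to $h_u$ from the rest of $K \setminus S$ is not a fixed function of bounded curvature that the constant $2/\rho$ can eventually dominate. The set $K \setminus S$ is an arbitrary relatively open set; its section defect can carry concave bumps and kinks at every scale, sitting exactly over the shadow of your ball, and the offending set does not shrink when you shrink or move the ball---nothing in the proposal excludes local cancellation of the ball's concave cap for \emph{every} direction $u$. Even in the cleanest case, where $K \setminus S$ is a single ball, your curvature count gives non-concavity of $f_u$ only at points where the chord function $g_u$ of $K$ has second derivative exceeding $-2/\rho$; since $g_u$ is merely concave, this can fail at the particular point $w_0$, so even here you need an argument selecting a good direction $u$, which is not supplied. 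Finally, the fallback of integrating a ``concavity defect'' over $u \in S^{m-1}$ is not formulated as a functional, let alone bounded from below by a quantity controlled by $|K \setminus S|$; making such an averaging argument rigorous is essentially what Figalli and Jerison do (Theorem~3.3 of this survey, via the $H^{1/2}$-norm) and is research-level work, not a finishing touch. As it stands, your text is a correct setup together with an accurate description of the difficulty, not a proof of Falconer's theorem.
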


Further data on history of convexity can be found in an account presented by Fenchel
\cite{F} in 1983. However, it was not until the mid-1980s that first results on
analytic characterization of convex sets had been published; see \cite{AK} and
\cite{Par}. Almost 40 years past since then, but there is still no summarizing
survey on diverse results in this field which has numerous applications; in
particular, in the analysis of Hardy-type inequalities \cite{BEL}. In the present
paper, our aim is to fill in this gap, at least partially, by reviewing the
mentioned, now classical, results along with new ones that appeared during the past
few years; see \cite{K} and \cite{S}.

The paper's plan is as follows. Several criteria for the convexity of a planar
domain/closed set are considered in Section~2; in particular, a characterization of
half-planes in terms of a distance function is given. In Section~3, various results
are presented which are valid for arbitrary $m \geq 2$. In particular, the
characterization of convexity is considered which involves a bilinear form defined
on exterior, unit normals on~$\partial D$. It is valid for a bounded domain $D$ with
smooth boundary and its proof is based on the Crofton formula for the surface
measure $|\partial D|$. The proof of another characterization of convexity is based
on application of another bilinear form; namely, the norm in the Sobolev space
$H^{1/2} (\RR^m)$.

{\centering \section{The convexity of planar sets} }

Throughout this section $D$ will denote a proper subdomain of the Euclidean plane
$\RR^2$, and so the boundary $\partial D$ is not empty. Hence the distance function
with respect to $\partial D$ is defined; namely,
\begin{equation}
d (x, \partial D) = \inf_{y \in \partial D} |x-y| \ \ \mbox{or} \ d (x) \ \mbox{for
brevity} \, ; \label{d}
\end{equation}
here $x = (x_1, x_2) \in D$ and $|x-y| = \sqrt{(x_1 - y_1)^2 + (x_2 - y_2)^2}$ is
the Euclidean norm in $\RR^2$. Thus the disc
\begin{equation}
B_r (x) = \{ y \in \RR^2 : |y-x| < r \} \subset D \ \ \mbox{provided} \ r = d (x) .
\label{disc}
\end{equation}
It occurs that the distance function $d (\cdot, \partial D)$ is a convenient tool
for characterization of convexity.

\vspace{2mm}

{\bf 2.1. Convexity of $\mathbf D$ via the distance function.} The following result
has a simple proof.

\vspace{1mm}

\begin{theorem}[Armitage and Kuran, \cite{AK}]
If $d (\cdot, \partial D)$ is superharmonic in a domain $D \subset \RR^2$, then $D$
is convex.
\end{theorem}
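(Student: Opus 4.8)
The plan is to prove the contrapositive: if $D$ is not convex, then $d(\cdot,\partial D)$ fails to be superharmonic somewhere in $D$. Since superharmonicity is a local/pointwise property (for a function of this regularity one can test it via the sub-mean-value inequality or, where $d$ is smooth, via $\Delta d \le 0$), it suffices to exhibit a single point of $D$ at which the sub-mean-value property is violated. So first I would translate the failure of convexity into concrete geometric data. If $D$ is not convex, there are two points $a,b\in D$ whose joining segment leaves $D$; by a compactness and continuity argument one locates a point $x_0\in D$ lying on such a segment that sits on the ``inward'' side of a nontrivially curved piece of the boundary — intuitively, $\partial D$ bends \emph{toward} $x_0$, so that the nearest-point geometry is locally concave rather than convex.

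Next I would examine the local structure of $d$ near such a point. The key geometric fact, from \eqref{disc}, is that the largest disc $B_r(x)$ with $r=d(x)$ fits inside $D$ and touches $\partial D$ at one or more nearest points. When $D$ is locally nonconvex, the relevant boundary arc curves around $x_0$, and I would compare $d$ with an explicit model: the distance to a circle of radius $R$ (curving the ``wrong'' way) from interior points. For the distance to a single smooth boundary curve with signed curvature $\kappa$ at the foot point, a standard computation gives that along the inward normal the two principal behaviors of $d$ are governed by the curvature; in the plane, where $d$ is smooth (away from the ridge/medial axis), one finds $\Delta d$ has a definite sign tied to $\kappa$. Concretely, the distance to a circle of radius $R$ from its interior is $d = R - \rho$ in polar coordinates centered at the circle, and a direct calculation yields $\Delta d = -1/\rho < 0$ along the radial direction \emph{but} the transverse (angular) second derivative contributes with the opposite sign; I would carry out this model computation to see that when the boundary curves toward the interior point, the Laplacian of $d$ becomes positive, i.e. $d$ is subharmonic, hence not superharmonic, there.

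The cleanest route, which I would try to make rigorous, is therefore: (i) reduce to a boundary arc of class $C^2$ with a point of curvature bending toward the interior (handling the general closed set by an approximation or by working at a nearest-point configuration where the geometry is controlled); (ii) use the eikonal identity $|\nabla d|=1$, valid where $d$ is $C^1$, which upon differentiation shows $\nabla d$ is a unit vector field whose integral curves are the normals to $\partial D$, so that $\Delta d$ equals the rate at which nearby normals spread — precisely (minus) the curvature of the level set of $d$ through $x_0$; (iii) observe that at a point where $\partial D$ is locally concave as seen from $D$, the level curves of $d$ have curvature of the sign making $\Delta d > 0$. This violates $\Delta d \le 0$ and hence superharmonicity.

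The main obstacle I anticipate is \emph{regularity}: the distance function is only Lipschitz in general and fails to be $C^2$ on the medial axis (the ridge set where the nearest point is non-unique), so I cannot blithely write $\Delta d$. To handle this I would either restrict attention to a neighborhood of $x_0$ lying strictly off the medial axis (possible when the foot point is a unique nearest point with nonvanishing curvature), where $d$ is genuinely smooth and the curvature computation is legitimate; or, more robustly, verify the failure of the sub-mean-value inequality directly by comparing the average of $d$ over a small circle about $x_0$ with $d(x_0)$, using the explicit circular model to estimate both sides. The first approach gives the slickest argument, while the second sidesteps differentiability entirely and is the safer fallback. Either way, the crux is to convert ``boundary bending inward'' into ``level sets of $d$ bending the wrong way,'' which is exactly the condition that breaks superharmonicity.
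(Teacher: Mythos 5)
Your overall strategy---prove the contrapositive and exhibit a point where superharmonicity fails---is the same as the paper's, and your ``safer fallback'' (test the mean-value inequality directly rather than compute $\Delta d$) is the right instinct. However, there is a genuine gap at the very first step, the one you describe as reducing to ``a boundary arc of class $C^2$ with a point of curvature bending toward the interior.'' A nonconvex planar domain need not possess any such arc: a nonconvex polygon has zero curvature everywhere except at corners, and $\partial D$ can even be nowhere smooth. So the curvature/eikonal analysis in your steps (ii)--(iii) cannot get started in general, and the circular model does not describe the local geometry. Your proposed repair by approximation is also logically backwards: if you smooth $D$ into $C^2$ domains $D_k$ and show each $d(\cdot,\partial D_k)$ fails to be superharmonic, that says nothing about $d(\cdot,\partial D)$, since non-superharmonicity does not pass to locally uniform limits. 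Likewise, your restriction to a neighborhood ``off the medial axis where the foot point has nonvanishing curvature'' presupposes exactly the structure that may not exist.

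What fills this gap in the paper is a purely geometric lemma requiring no smoothness (Valentine, Theorem 4.8): if $D$ is nonconvex, there exist a boundary point $y$, a closed half-plane $P$ with $y\in\partial P$ (normalize $y=(0,0)$, $P=\{x_2\geq 0\}$), and $r>0$ such that $P\cap(\overline{B_r(y)}\setminus\{y\})\subset D$ while $(\partial D\setminus\{y\})\cap B_r(y)$ lies strictly below the $x_1$-axis. In this punctured half-disc configuration the nearby boundary, seen from above, is essentially the single point $y$, so for $x$ in the small disc $B=B_{r/8}(x_0)$, $x_0=(0,r/4)$, one has $d(x,\partial D)>x_2$ whenever $x_1\neq 0$, whereas $d(x_0,\partial D)=r/4$. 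Hence
\[
\frac{1}{\pi (r/8)^2}\int_B d(x,\partial D)\,\D x \;>\; \frac{1}{\pi (r/8)^2}\int_B x_2\,\D x \;=\; \frac{r}{4} \;=\; d(x_0,\partial D),
\]
violating the super-mean-value inequality at $x_0$. Note that the comparison function here is the harmonic function $x_2$ (the distance to the supporting line at $y$), not a circle; this is what makes the argument work for arbitrary nonconvex $D$, with no curvature, no regularity of $d$, and no medial-axis considerations entering at all. If you replace your step (i) by this lemma and run your mean-value comparison against $x_2$, your fallback route becomes exactly the paper's proof.
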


\begin{proof}
Let us assume $D \subset \RR^2$ to be nonconvex and show that $d (\cdot, \partial
D)$ is not superharmonic in~$D$. According to the assumption, there exist a point
$y \in \partial D$, a closed half-plane $P$ with $y \in \partial P$ (without loss of
generality, we set $y = (0,0)$ and $P = \{ x \in \RR^2 : x_2 \geq 0 \}$), and $r >
0$ such that
\[ P \cap (\overline{B_r (y)} \setminus \{y\}) \subset D \, , \ \ \mbox{whereas} \ \
(\partial D \setminus \{y\}) \cap B_r (y) \subset \RR^2 \setminus P \, ;
\]
cf. \cite[Theorem 4.8]{Val}.

Then for $x \in B = B_{r/8} (x_0)$, where $x_0 = (0, r/4)$, we have that $d (x,
\partial D) > x_2$ provided $x_1 \neq 0$. Hence
\[ \int_B d (x, \partial D) \, \D x > \int_B x_2 \, \D x = \pi (r/8)^2 (r/4) =
\pi (r/8)^2 d (x_0, \partial D) \, ,
\]
and so the area mean-value inequality, guaranteeing the superharmonicity of $d
(\cdot, \partial D)$, is violated for~$B$.
\end{proof}

It should be emphasized that Theorem 2.1 is specifically a two-dimensional result,
because neither $D$ nor $\bar D$ need be convex in higher dimensions. The example
given by Armitage and Kuran \cite[Sect.~5]{AK} to confirm this assertion involves
the domain in $\RR^3$ bounded by a particular torus.

\vspace{2mm}

{\bf 2.2. Characterization of half-planes.} Presumably, the first question about an
analytic characterization of a convex set appeared as an exercise in the textbook
\cite{Fu}. It concerns a half-plane which is the maximal, so to speak, convex domain
in $\RR^2$. The exceptional analytic property of its distance function $d$ arises
from the following observation.

Let a point $x \in \RR^2$ belong to a half-plane which can be taken $\{ x \in \RR^2
: x_2 > 0 \}$, without loss of generality. It is obvious that the distance from $x$
to the $x_1$-axis bounding this half-plane\,---\,the length of the segment
orthogonal to the $x_1$-axis\,---\,is equal to $x_2$, and so is a harmonic function
in this domain. The converse assertion is far from being trivial to prove; it is
referred to as one of ``three secrets about harmonic functions'' in the interesting
note \cite{B}, where the characterization of half-planes is as follows.

\begin{theorem}
If $d (\cdot, \partial D)$ is harmonic in a domain $D \subset \RR^2$, then $D$ is a
half-plane.
\end{theorem}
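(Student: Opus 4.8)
The plan is to prove the contrapositive-style characterization directly: assume $d(\cdot,\partial D)$ is harmonic in $D$ and deduce that $D$ is a half-plane. Since harmonicity implies superharmonicity, Theorem~2.1 already guarantees that $D$ is convex, so I get the convexity of $D$ for free and only need to upgrade ``convex'' to ``half-plane.'' The geometric content I would exploit is that for a convex domain the distance function has a clean structure: at each interior point $x$ there is a nearest boundary point $p(x)$, and moving along the inward normal direction from $\partial D$ increases $d$ at unit rate. In fact on a convex domain $d$ is concave (this is standard, and consistent with superharmonicity), so the harmonicity forces a rigidity.

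First I would record the regularity and eikonal facts: on a convex domain the nearest-point projection onto $\partial D$ is single-valued (Theorem~1.1/Motzkin gives uniqueness of nearest points once one works with the closed complement), $d$ is differentiable wherever the nearest point is unique, and it satisfies the eikonal equation $|\nabla d| = 1$ on the open set where it is smooth. Next I would combine $\Delta d = 0$ with $|\nabla d|^2 = 1$. Differentiating the eikonal relation gives $\sum_j (\partial_j d)(\partial_i\partial_j d) = 0$, i.e. the Hessian $D^2 d$ annihilates the gradient direction $\nabla d$. In two dimensions this says $D^2 d$ has $\nabla d$ as a null eigenvector, so its only possibly nonzero eigenvalue is the one in the orthogonal (tangential) direction; since $\Delta d = \operatorname{tr} D^2 d = 0$, that remaining eigenvalue must also vanish. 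Hence $D^2 d \equiv 0$ throughout the smooth part of $D$, so $d$ is an affine function there, $d(x) = a\cdot x + b$, with $|a| = 1$ from the eikonal equation.

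From $d(x) = a\cdot x + b$ with $|a|=1$ I would finish by identifying the geometry. The level set $\{d = 0\}$ extended to the plane is the line $a\cdot x + b = 0$, and because $d$ is the distance to $\partial D$ and is positive on $D$, the domain must lie on one side of this line and its distance to the line must coincide with $d$; a point on $\partial D$ is exactly where $d \to 0$, forcing $\partial D$ to be contained in that line and $D$ to be the open half-plane $\{a\cdot x + b > 0\}$. The main obstacle, and the step I would treat most carefully, is the regularity issue: the eikonal computation is valid only where $d$ is $C^2$, and a priori $d$ could fail to be twice differentiable on the \emph{medial axis} (the ridge set where the nearest boundary point is non-unique). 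For a genuine half-plane this ridge set is empty, so the real work is to show harmonicity \emph{rules out} any ridge. I would argue that on a convex domain the ridge/singular set of $d$ has measure zero and $d$ is concave, so the distributional Laplacian of $d$ is a nonpositive measure that is nontrivial precisely when the ridge is nonempty; harmonicity ($\Delta d = 0$ as a distribution) then forces the singular part to vanish, eliminating the ridge and validating the pointwise computation above on all of $D$. Making this distributional argument rigorous, rather than the clean-but-conditional pointwise Hessian calculation, is where the genuine difficulty lies.
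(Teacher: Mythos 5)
Your route is genuinely different from the paper's, and its core rigidity idea (eikonal equation plus harmonicity forces the Hessian to vanish in two dimensions) is sound; however, two of your supporting claims are false as stated, including the one you yourself single out as carrying the main burden. First, the claim that convexity of $D$ makes the nearest-point projection onto $\partial D$ single-valued is wrong: Motzkin's theorem applies to the closed set onto which one projects, and for a convex domain $D$ the closed complement $\RR^2 \setminus D$ is generally \emph{not} convex --- the center of a disc has every boundary point as a nearest point. Second, and more seriously, your ridge-elimination argument fails: it is not true that the singular part of the nonpositive distributional Laplacian of a concave distance function is ``nontrivial precisely when the ridge is nonempty.'' For the disc $B_R(0)$ one has $d(x) = R - |x|$, the ridge is the single point $\{0\}$, yet $\Delta d = -1/|x|$ is purely absolutely continuous: there is no singular part at all, and what obstructs harmonicity there is the absolutely continuous part. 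So ``harmonicity kills the singular part, hence the ridge is empty'' is a non sequitur as written.

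Fortunately, the difficulty you flag is illusory, and the repair is simpler than your proposed argument. Harmonicity of $d$ implies that $d$ is real-analytic, hence differentiable at \emph{every} point of $D$; and for a distance function, differentiability at $x$ already forces the nearest boundary point to be unique and $|\nabla d(x)| = 1$: if $p$ is a nearest point, then $d$ decreases at exactly unit rate along the segment toward $p$, so the $1$-Lipschitz bound gives $\nabla d(x) = (x-p)/|x-p|$, which pins down $p$. Thus the eikonal equation holds throughout $D$, the ridge is automatically empty, and your pointwise Hessian computation ($D^2 d \, \nabla d = 0$ together with $\operatorname{tr} D^2 d = 0$ forces $D^2 d \equiv 0$ in two dimensions) is valid on all of $D$; the identification of $D$ with the half-plane $\{a \cdot x + b > 0\}$ then goes through essentially as you sketch. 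Alternatively, the correct form of your distributional argument is: concavity of $d$ (from convexity of $D$) makes the distributional Hessian a nonpositive-definite matrix-valued measure, whose trace $\Delta d$ vanishes, hence the whole Hessian measure vanishes and $d$ is affine --- no discussion of the ridge is needed. For comparison, the paper argues quite differently and more elementarily: from a point $x^*$ and its nearest boundary point it builds a doubling chain of discs along the inward normal, using harmonicity, continuity, and real-analyticity to show $d(x) = x_2$ on each disc, thereby exhibiting a half-plane $P_0 \subset D$ with $d(x) = x_2$ on $P_0$; Theorem~2.1 then forces $D = P_0$. Both proofs lean on Theorem~2.1, but the paper never differentiates $d$, while your (repaired) argument is local and PDE-theoretic.
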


\begin{proof}[A sketch of proof]
It is clear that \eqref{disc} is valid for every $x \in D$ and $D = d^{-1} (0,
+\infty)$. Fixing $x^* \in D$ arbitrarily, we denote by $y^*$ a point on $\partial D$
with $|y^* - x^*| = d (x^*)$. Without loss of generality, $y^*$ can be taken as the
origin, whereas $x^* = (0, x_2^*)$ with $x_2^* > 0$. Then $B_{x_2^*} (x^*) \subset
D$ and the fact that
\[ d (x) = x_2 \ \mbox{for every} \ x = (0, x_2) \ \mbox{with} \ x_2 \in (0, 2 x_2^*] 
\]
is a consequence of harmonicity and continuity of $d$. Hence the equality $d (x) =
x_2$ is valid in $B_{x_2^*} (x^*)$ because harmonic functions are real-analytic. In
the same way, we have that $B_{2 x_2^*} (x^*_1) \subset D$, where $x_1^* = (0, 2
x_2^*)$, and so
\[ d (x) = x_2 \ \mbox{for every} \ x = (0, x_2) \ \mbox{with} \ x_2 \in (0, 2^2 x_2^*] ,
\]
in which case $d (x) = x_2$ in $B_{2 x_2^*} (x^*_1)$. Iterating this procedure, we
obtain that $d (x) = x_2$ in the disc $B_{2^n x_2^*} (x^*_n) \subset D$, where
$x_n^* = (0, 2^n x_2^*)$.

It is clear that the union of discs $\cup_{n \in \{1,2,\dots\}} B_{2^n x_2^*}
(x^*_n)$ coincides with the half-plane 
\[ P_0 = \{ x \in \RR^2 : x_2 > 0 \} \ \ \mbox{and} \ d (x) = x_2 \ \mbox{in} \ P_0 ,
\]
because this equality is valid in each disc. Since the latter is a subset of $D$, we
have that $P_0 \subset D$. On the other hand, $d$ is superharmonic in $D$, and so
this domain is convex by Theorem~2.1. But a convex domain can contain a half-plane
only coinciding with it; thus, $D = P_0$.
\end{proof}


{\bf 2.3. Convexity of $\mathbf D$ via solutions to the modified Helmholtz
equation.} It occurs that it is possible to describe the domain's convexity by means
of these solutions (labelled panharmonic functions in \cite{D}) by imposing some
conditions on them. Let $\partial D$ be bounded and sufficiently smooth, say $C^2$;
then the Dirichlet problem for the modified Helmholtz equation
\begin{equation}
\nabla^2 v - \mu^2 v = 0 \ \ \mbox{for} \ x \in D, \quad \mu \in \RR_+ ,
\label{MHh}
\end{equation}
has a unique solution satisfying the boundary condition
\begin{equation}
v (x) = 1 \quad \mbox{for} \ x \in \partial D ; \label{bc}
\end{equation}
here and below $\nabla = (\partial_1, \partial_2)$ is the gradient operator,
$\partial_i = \partial / \partial x_i$. It has long been known (see, for example,
\cite{H}) that a solution of this problem, say $v (x, \mu)$, satisfies the
inequalities
\begin{equation}
0 < v (x, \mu) \leq 1 \ \ \mbox{for all} \ x \in \bar D \ \mbox{and any} \ \mu > 0.
\label{ineq}
\end{equation}
In this connection, see the recent survey \cite[Sect.~2.1]{AN}, where a detailed
account of results concerning the strong maximum principle for elliptic operators is
presented.

Let us show that problem's solutions corresponding to large values of $\mu$
characterize the domain $D$ in the following way.

\begin{theorem}
Let $D \subset \RR^2$ be a bounded domain with smooth boundary. If the condition
\begin{equation}
|\nabla v (x, \mu)| \leq \mu v (x, \mu) , \ \ x \in D , \label{cond}
\end{equation}
holds for solutions of problem \eqref{MHh}, \eqref{bc} with all large $\mu > 0$, then
$D$ is convex.
\end{theorem}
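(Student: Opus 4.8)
The plan is to translate the hypothesis \eqref{cond} into superharmonicity of the distance function and then apply Theorem~2.1. By \eqref{ineq} together with elliptic regularity for the smooth Dirichlet problem \eqref{MHh}, \eqref{bc}, the solution $v(\cdot,\mu)$ is smooth and strictly positive in $D$, so
\[ w_\mu = -\mu^{-1} \log v(\cdot, \mu) \]
is well defined, smooth and nonnegative in $D$, with $w_\mu = 0$ on $\partial D$. Writing $v = \E^{-\mu w_\mu}$, substituting into \eqref{MHh} and dividing by $\mu^2 v$ gives the identity
\[ |\nabla w_\mu|^2 - \mu^{-1} \nabla^2 w_\mu = 1 \quad \mbox{in } D . \]
Since $v > 0$ we also have $|\nabla v| = \mu\, |\nabla w_\mu|\, v$, so \eqref{cond} is equivalent to $|\nabla w_\mu| \le 1$ in $D$. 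Combining this inequality with the identity above, I obtain $\nabla^2 w_\mu \le 0$; that is, \emph{$w_\mu$ is superharmonic in $D$ for every sufficiently large $\mu$.}

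Next I would let $\mu \to \infty$. The identity for $w_\mu$ is exactly a vanishing-viscosity regularization (viscosity parameter $\mu^{-1}$) of the eikonal equation $|\nabla w|^2 = 1$ subject to $w = 0$ on $\partial D$, whose unique viscosity solution is the distance function $d(\cdot, \partial D)$. Hence I expect $w_\mu \to d(\cdot, \partial D)$ locally uniformly in $D$; crucially, this large-$\mu$ asymptotics holds with no convexity assumption on $D$. Because a locally uniform limit of superharmonic functions is again superharmonic---the solid mean-value inequality passes to the limit, and the continuous limit is automatically upper semicontinuous---the function $d(\cdot, \partial D)$ is superharmonic in $D$. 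Theorem~2.1 then yields at once that $D$ is convex.

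The technical heart of the argument is the convergence $w_\mu \to d(\cdot,\partial D)$, and I would establish it by two-sided comparison. For the upper bound on $v$ one compares, on each inscribed disc $B_{d(x)}(x) \subset D$ (cf. \eqref{disc}), the solution $v$ with the radial solution $I_0(\mu|\cdot - x|)/I_0(\mu\, d(x))$ of \eqref{MHh} that equals $1$ on $\partial B_{d(x)}(x)$; since $v \le 1$ there, the maximum principle for $\nabla^2 - \mu^2$ gives $v(x) \le 1/I_0(\mu\, d(x))$, whence $w_\mu(x) \ge d(x) - o(1)$ using $I_0(z) \sim \E^{z}/\sqrt{2\pi z}$. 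For the matching lower bound $w_\mu(x) \le d(x) + o(1)$ one constructs a subsolution barrier near the nearest boundary point, exploiting the uniform interior and exterior ball conditions guaranteed by the $C^2$ smoothness of $\partial D$ (locally the half-plane solution $\E^{-\mu x_2}$ serves as the model). Taking $-\mu^{-1}\log$ of these estimates and noting that the remainders are $o(1)$ uniformly on compact subsets of $D$ yields the required convergence; no control up to $\partial D$ is needed, since Theorem~2.1 uses only superharmonicity in the open domain. The main delicacy to watch is uniformity of the $o(1)$ terms, so that the mean-value inequality indeed survives the passage to the limit.
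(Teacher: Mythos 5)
Your proposal is correct in substance, and its skeleton coincides with the paper's: the substitution $v = \E^{-\mu w_\mu}$ and the equivalence of \eqref{cond} with $|\nabla w_\mu| \le 1$, hence $\nabla^2 w_\mu \le 0$, is exactly the paper's logarithmic computation, and your closing step (the mean-value inequality survives the limit, then Theorem~2.1 finishes) is also the paper's. Where you genuinely diverge is the convergence $w_\mu \to d(\cdot,\partial D)$: the paper does not prove it but imports it from Varadhan's article --- Theorem~2.4 (uniform convergence on $\bar D$) together with Proposition~2.1 (the estimate $v \le C_\rho \exp\{-\mu(1-\rho)\,d\}$), the latter supplying the explicit pointwise bound \eqref{final} that is then averaged over discs. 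You instead propose a self-contained, elementary proof by two-sided comparison with radial solutions. Your inside estimate is complete and correct: on the inscribed disc $B_{d(x)}(x)$ the function $I_0(\mu|y-x|)/I_0(\mu\, d(x))$ solves \eqref{MHh}, dominates $v$ by the maximum principle for $\nabla^2 - \mu^2$ (legitimate, since the zeroth-order coefficient is negative), and the asymptotics of $I_0$ give $w_\mu \ge d - o(1)$ locally uniformly; this exactly replaces Proposition~2.1. The trade-off: the paper's proof is shorter but rests on nontrivial heat-kernel/large-deviation machinery, while yours uses only classical comparison arguments and, as you correctly note, only locally uniform convergence in the open domain, which is all Theorem~2.1 needs.

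One detail of your outside estimate needs repair, however. The half-plane solution $\E^{-\mu x_2}$ attached to the tangent line at the nearest boundary point $y^*$ is \emph{not} by itself an admissible subsolution barrier when $D$ is not known to be convex: a barrier must lie below $v = 1$ on \emph{all} of $\partial D$, i.e. $\partial D$ would have to lie in $\{x_2 \ge 0\}$, which is precisely what cannot be assumed. The fix is the one your own appeal to the exterior ball condition suggests: let $B_\delta(z^*)$ be the uniform exterior tangent ball at $y^*$ (available since $\partial D$ is $C^2$ and compact) and take $\phi(y) = K_0(\mu|y - z^*|)/K_0(\mu\delta)$, where $K_0$ is the modified Bessel function of the second kind. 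Then $\phi$ solves \eqref{MHh} in $\RR^2 \setminus \{z^*\} \supset \bar D$; since $|y - z^*| \ge \delta$ on $\partial D$ and $K_0$ is decreasing, $\phi \le 1 = v$ there, so the maximum principle gives $v \ge \phi$ in $D$. Because $x$, $y^*$, $z^*$ are collinear, $|x - z^*| = d(x) + \delta$, and the asymptotics $K_0(z) \sim \sqrt{\pi/(2z)}\,\E^{-z}$ yield $w_\mu(x) \le d(x) + O(\mu^{-1})$ uniformly on $\bar D$. With this substitution your argument is complete and proves the theorem without citing Varadhan's results.
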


Before proving this theorem, let us illustrate it with two elementary examples. In
the first one, the domain $D$ is the half-plane $P_0$, in which case it is natural
to require that $v (x, \mu) \to 0$ as $x_2 \to \infty$. Duffin \cite{D} established
that $v (x, \mu) = \E^{- \mu x_2}$ solves problem \eqref{MHh}, \eqref{bc}
complemented by the latter condition; see his proof of Theorem 5. Condition
\eqref{cond} is fulfilled for this function for all $\mu > 0$, and so Theorem~2.3
implies the obvious fact that $D$ is convex. Notice that
\[ - \mu^{-1} \log v (x, \mu) = x_2 = d (x, \partial D)
\]
in agreement with the assertion of Theorem~2.4 formulated below.

In the second example, the domain $D$ is a disc, and so convex. As usual, $I_0$
stands for the modified Bessel function of the first kind (see, for example,
\cite[p.~111]{D}). If $a \in (0, 1)$, then $v (x, \mu) = a I_0 (\mu |x|)$ solves
problem \eqref{MHh}, \eqref{bc} in the disc $B_r (0) = \{ y \in \RR^2 : |y| < r \}$
provided $r > 0$ is such that $a I_0 (\mu r) = 1$.

It is clear that properties of the function $I_0$ (see \cite[sect.~9.6]{AS}) yield
condition \eqref{cond} for $v (\cdot, \mu)$; indeed, it takes the form $I_1 (\mu |x|)
< I_0 (\mu |x|)$, because
\[ |\nabla v (x, \mu)| = \mu a I_1 (\mu |x|)
\]
in view of formula \cite[9.6.27]{AS}.

\vspace{1mm}

Now, we formulate two assertions essential for proving Theorem~2.3; they follow from
results obtained in the classical Varadhan's article \cite{V}.

\begin{theorem}[\cite{V}, p.~434]
Let $D \subset \RR^2$ be a bounded domain with smooth boundary. If $v (x, \mu)$ is a
solution of the Dirichlet problem \eqref{MHh}, \eqref{bc}, then 
\begin{equation}
- \mu^{-1} \log v (x, \mu) \to d (x, \partial D) \quad as \ \mu \to \infty
\label{lim}
\end{equation}
uniformly on $\bar D$.
\end{theorem}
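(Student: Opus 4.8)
The plan is to transform the linear problem \eqref{MHh}, \eqref{bc} into a nonlinear one for $\phi := -\mu^{-1}\log v$ and to recognise \eqref{lim} as a vanishing-viscosity limit. Substituting $v = \E^{-\mu\phi}$ into \eqref{MHh} gives $|\nabla\phi|^2 - \mu^{-1}\nabla^2\phi = 1$ in $D$, together with $\phi = 0$ on $\partial D$ (from \eqref{bc}), while \eqref{ineq} yields $\phi \ge 0$. As $\mu \to \infty$ the viscosity term $\mu^{-1}\nabla^2\phi$ formally disappears and one is left with the eikonal equation $|\nabla\phi| = 1$, whose unique nonnegative solution vanishing on $\partial D$ is precisely $d(\cdot,\partial D)$. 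This identifies the limit and shows me that the content of the theorem is a pair of matching estimates $d - o(1) \le \phi \le d + o(1)$, which I would obtain by comparison with explicit radial solutions of \eqref{MHh} on discs, exploiting the $C^2$-regularity of $\partial D$.

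For the lower bound $\phi \ge d - o(1)$ (equivalently an upper bound for $v$) I would use the inscribed disc. By \eqref{disc}, $B_{d(x)}(x) \subset D$, and on it the radial function $w(y) = I_0(\mu|y - x|)/I_0(\mu\,d(x))$ solves \eqref{MHh} and equals $1$ on $\partial B_{d(x)}(x)$. Since $v \le 1 = w$ there, the comparison principle for $\nabla^2 - \mu^2$ (whose solutions attain their extrema on the boundary) gives $v(x) \le w(x) = 1/I_0(\mu\,d(x))$. The classical asymptotics $I_0(s) \sim \E^{s}/\sqrt{2\pi s}$ then yield $\phi(x) = \mu^{-1}\log I_0(\mu\,d(x)) \to d(x)$.

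The opposite, and harder, estimate $\phi \le d + o(1)$ (a lower bound for $v$) is the main obstacle, because it requires propagating information from $\partial D$ into the interior, where $d$ need not be smooth along the medial axis. Here the $C^2$-assumption is decisive: it provides a uniform radius $\rho > 0$ such that every boundary point admits a tangent exterior disc $B_\rho(w)$ with $\overline{B_\rho(w)} \cap \overline D = \{y^*\}$. Choosing $y^*$ to be a nearest boundary point to $x$, so that $x$, $y^*$ and $w$ are collinear with $|x - w| = d(x) + \rho$, I would compare $v$ with the radial function $\ell(y) = K_0(\mu|y - w|)/K_0(\mu\rho)$, which solves \eqref{MHh} throughout $D$ and satisfies $\ell \le 1 = v$ on $\partial D$ because $K_0$ is decreasing and $|y - w| \ge \rho$ there. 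The comparison principle then gives $v(x) \ge \ell(x) = K_0(\mu(d(x)+\rho))/K_0(\mu\rho)$, and the asymptotics $K_0(s) \sim \sqrt{\pi/(2s)}\,\E^{-s}$ produce $\phi(x) \le d(x) + \mu^{-1}\log\sqrt{(d(x)+\rho)/\rho} + o(1) \to d(x)$.

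Finally I would check that both one-sided estimates are uniform on $\bar D$: the error terms depend only on $\mu$, on the fixed reach $\rho$, and on $d(x)$, which ranges over the compact interval $[0, \mathrm{diam}\,D]$; continuity of $d$ and of the Bessel asymptotics (with the degenerate case $d(x) \to 0$ handled by $\phi$ and $d$ tending to $0$ simultaneously on $\partial D$) then gives \eqref{lim} uniformly. A probabilistic alternative, closer to Varadhan's own viewpoint, would represent $v(x,\mu) = \mathbb{E}_x[\exp(-\tfrac12\mu^2\tau)]$ with $\tau$ the exit time of Brownian motion from $D$ and derive the same two bounds from Gaussian large-deviation estimates for $\tau$; but the two-barrier argument above seems to me the most economical route.
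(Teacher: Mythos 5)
Your proof is correct, but it necessarily differs from the paper, because the paper gives no proof of this statement at all: Theorem~2.4 is quoted from Varadhan's article \cite{V}, p.~434, where it is derived (for general variable-coefficient elliptic operators) from heat-kernel estimates---essentially the probabilistic, large-deviation route you mention only as an afterthought. Your two-barrier comparison is thus a genuinely different, self-contained and more elementary argument, tailored to the constant-coefficient operator $\nabla^2 - \mu^2$. Both halves check out: on the inscribed disc \eqref{disc} the barrier $I_0(\mu|y-x|)/I_0(\mu\, d(x))$ dominates $v$ by \eqref{ineq} and the maximum principle (valid because the zeroth-order coefficient $-\mu^2$ is negative), giving $v(x) \le 1/I_0(\mu\, d(x))$; and the $C^2$ hypothesis yields the uniform exterior tangent disc, whose center $w$ lies outside $\bar D$ with $|y-w| \ge \rho$ on $\partial D$, so the $K_0$-barrier is admissible and gives $v(x) \ge K_0(\mu(d(x)+\rho))/K_0(\mu\rho)$. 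Your treatment of uniformity, including the degenerate region $d(x) \to 0$, is also sound. What your route buys is independence from \cite{V}: in fact your $I_0$-bound, after absorbing the algebraic factor $\sqrt{2\pi\mu\, d(x)}$ into a constant, is precisely Proposition~2.1, so the whole of Section~2.3 could be made self-contained this way; what Varadhan's approach buys is far greater generality (variable coefficients, general second-order operators). One small slip: it is not true that solutions of \eqref{MHh} attain both extrema on the boundary (e.g.\ $\cosh(\mu x_1)$ has an interior minimum); the correct statement, which suffices at both places you invoke comparison, is that a solution nonpositive on the boundary is nonpositive throughout the domain.
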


The second assertion is a corollary of Theorem~3.6, \cite{V}.

\begin{proposition}
Let $v (x, \mu)$ be a solution of the Dirichlet problem \eqref{MHh}, \eqref{bc} in a
bounded domain $D \subset \RR^2$ with smooth boundary. Then for every $\rho \in (0,
1/2)$, there exists a constant $C_\rho > 1$ such that the estimate
\[ v (x, \mu) \leq C_\rho \exp \{ - \mu \, (1 - \rho) \, d (x, \partial D) \} \, ,
\quad x \in \bar D ,
\]
is valid for all $\mu > 0$.
\end{proposition}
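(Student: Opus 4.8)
The plan is to bound $v(\cdot,\mu)$ at each interior point by comparison with the explicit radial solution of \eqref{MHh} in the largest disc inscribed in $D$ at that point, and then to convert this Bessel-function bound into the stated exponential estimate by means of the asymptotics of $I_0$. This is the analytic counterpart of Varadhan's probabilistic route: writing $v(x,\mu)=\mathbb{E}_x[\exp(-\tfrac12\mu^2\tau)]$, where $\tau$ is the exit time of Brownian motion from $D$, the estimate reflects the fact that the process must travel at least the distance $d(x,\partial D)$ before leaving $D$, which is essentially the content of Theorem~3.6 in \cite{V}.

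First I would fix $x_0\in D$ and set $r=d(x_0,\partial D)$, so that $B_r(x_0)\subset D$ by \eqref{disc}. As in the second example above, the function
\[ g(y)=\frac{I_0(\mu\,|y-x_0|)}{I_0(\mu r)} \]
solves \eqref{MHh} in $B_r(x_0)$ and equals $1$ on $\partial B_r(x_0)$. Since $v(\cdot,\mu)\le 1$ on $\bar D$ by \eqref{ineq}, we have $v\le g$ on $\partial B_r(x_0)$, while both $v$ and $g$ satisfy $\nabla^2 u-\mu^2 u=0$ in $B_r(x_0)$. The key step is the comparison principle for the operator $\nabla^2-\mu^2$: if $w:=v-g$ attained a positive interior maximum at some $p$, then $\nabla^2 w(p)\le 0$ together with $w(p)>0$ would force $(\nabla^2-\mu^2)w(p)<0$, contradicting $(\nabla^2-\mu^2)w=0$; hence $w\le 0$, that is $v\le g$, in $B_r(x_0)$. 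Evaluating at the centre gives
\[ v(x_0,\mu)\le g(x_0)=\frac{1}{I_0(\mu\,d(x_0,\partial D))}. \]
Because $x_0\in D$ is arbitrary and $I_0(0)=1$, this inequality holds for every $x\in\bar D$.

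It then remains to absorb the Bessel factor into the exponential. Writing $z=\mu\,d(x,\partial D)\ge 0$, I would show that
\[ C_\rho:=\sup_{z\ge 0}\frac{\exp\{(1-\rho)z\}}{I_0(z)} \]
is finite for each $\rho\in(0,1)$, in particular for $\rho\in(0,1/2)$. This uses the classical asymptotics $I_0(z)\sim\exp(z)/\sqrt{2\pi z}$ as $z\to\infty$ (see \cite[sect.~9.7]{AS}): the quotient is continuous on $[0,\infty)$, equals $1$ at $z=0$, and behaves like $\sqrt{2\pi z}\,\exp\{-\rho z\}\to 0$ at infinity, so it is bounded; moreover, since $I_0'(0)=I_1(0)=0$, its derivative at the origin equals $1-\rho>0$, whence $C_\rho>1$. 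With this constant $1/I_0(\mu\,d)\le C_\rho\exp\{-\mu(1-\rho)\,d\}$, which, combined with the comparison bound, yields the assertion for all $\mu>0$ simultaneously.

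The routine parts are the two explicit computations, namely the radial solution $g$ and the supremum defining $C_\rho$. The step carrying the real content, and the one I would be most careful about, is the comparison principle: one must check that the inscribed disc $B_r(x_0)$ genuinely lies in $D$ and that the maximum principle for $\nabla^2-\mu^2$ applies with the correct sign, so that the globally explicit disc solution\,---\,rather than any information about the possibly irregular geometry of $\partial D$ away from $x_0$\,---\,controls $v$ at $x_0$. This localization to a single inscribed disc is exactly what makes the resulting constant uniform in $\mu$ and independent of the shape of $D$.
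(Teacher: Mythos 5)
Your proof is correct, but it takes a genuinely different route from the paper: the paper does not prove Proposition 2.1 at all, instead deriving it in one sentence as a corollary of Theorem 3.6 of Varadhan's article \cite{V}, which rests on estimates for the fundamental solution of the heat equation with variable coefficients. Your argument is self-contained and elementary. The comparison with the explicit radial solution $I_0(\mu|y-x_0|)/I_0(\mu r)$ in the inscribed disc $B_r(x_0)$ is legitimate: the disc lies in $D$ by \eqref{disc}, the boundary inequality follows from \eqref{ineq}, and your contradiction argument at a positive interior maximum is exactly the standard weak maximum principle for $\nabla^2-\mu^2$ (the strict sign $-\mu^2 w(p)<0$ closes the usual loophole, so no refinement is needed). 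This yields the pointwise bound $v(x,\mu)\le 1/I_0(\mu\, d(x,\partial D))$ on $\bar D$, and absorbing the Bessel factor via $I_0(z)\sim \E^{z}/\sqrt{2\pi z}$ is routine; your verification that $C_\rho>1$ via the derivative at the origin is also correct. Notably, your route proves more than the stated Proposition: the constant $C_\rho$ is finite for every $\rho\in(0,1)$, not only $\rho\in(0,1/2)$, and the intermediate bound $1/I_0(\mu d)$ is sharper than the exponential one. What the citation to Varadhan buys the paper is generality and economy\,---\,his results cover variable-coefficient operators and simultaneously supply the matching lower bound behind Theorem 2.4, i.e. the limit \eqref{lim}; what your approach buys is transparency, an explicit constant uniform in $\mu$, and complete independence from heat-kernel machinery.
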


\begin{proof}[Proof of Theorem 2.3.] It is sufficient to show that conditions
\eqref{cond} imply that $d$ is superharmonic in $D$; indeed, this allows us to apply
Theorem 2.1, thus demonstrating the convexity of $D$.

For a nonvanishing $u \in C^2 (D)$, one obtains by a straightforward calculation
that
\[ \nabla^2 (\log u) = - \frac{|\nabla u|^2}{u^2} + \frac{\nabla^2 u}{u} \quad
\mbox{in} \ D.
\]
Therefore, if  $v (\cdot, \mu) > 0$ is a solution of the Dirichlet problem
\eqref{MHh}, \eqref{bc} with large $\mu > 0$, then 
\[ - \nabla^2 (\log v) = - \mu^2 + |\nabla v|^2 / v^2 < 0
\]
provided condition \eqref{cond} is valid. Hence, $- \mu^{-1} \log v (\cdot, \mu)$ is
superharmonic in $D$ for all large $\mu > 0$.

The next step is to demonstrate that passage to the limit in Theorem~2.4 preserves
superharmonicity. In view of inequality \eqref{ineq} and condition \eqref{cond},
Proposition~2.1 implies that
\begin{equation}
- \mu^{-1} \log v (x, \mu) \geq - \mu^{-1} \log C_\rho + (1 - \rho) \, d (x,
\partial D) \, , \quad x \in \bar D . \label{final}
\end{equation}
Averaging this over an arbitrary open disc $B_r (x)$ such that $\overline{B_r (x)}
\subset D$, we obtain
\[ - \mu^{-1} \log v (x, \mu) \geq - \mu^{-1} \log C_\rho + \frac{1 - \rho}{\pi r^2}
\int_{B_r (x)} d (y, \partial D) \, \D y \, .
\]
Indeed, the function on the left-hand side of \eqref{final} is superharmonic and the
first term on the right is harmonic. Letting $\mu \to \infty$ first and then $\rho
\to +0$, we see that the last inequality turns into
\[ d (x, \partial D) \geq \frac{1}{\pi r^2} \int_{B_r (x)} d (y, \partial D) \, \D y
\, , \quad x \in D ,
\]
in view of \eqref{lim}.

Since the obtained inequality is valid for all $r > 0$ such that $\overline{B_r (x)}
\subset D$, the distance function $d (\cdot, \partial D)$ is superharmonic in $D$.
Then Theorem~2.1 yields the assertion of Theorem~2.3, thus completing its proof.
\end{proof}

Many assertions about convexity (see, in particular, Theorems~1.1 and 1.2) are
formulated in the following form: ``a set is convex if and only if some property is
fulfilled''. Therefore, it is interesting to ascertain whether the converse of
Theorem~2.3 is true.

\begin{conjecture}
Let $D \subset \RR^2$ be a bounded, convex domain with smooth boundary. Then
inequality \eqref{cond} is fulfilled for solutions of problem \eqref{MHh},
\eqref{bc} provided $\mu$ is sufficiently large.
\end{conjecture}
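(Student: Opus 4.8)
The plan is to reformulate \eqref{cond} as a one-sided bound on the Laplacian of $\phi := -\mu^{-1}\log v$ and then to establish that bound by combining an interior maximum principle with a geometric comparison at the boundary. As in the proof of Theorem~2.3, the identity $\nabla^2(\log v) = \mu^2 - |\nabla v|^2/v^2$ shows that, since $v>0$ on $\bar D$ by \eqref{ineq}, condition \eqref{cond} is \emph{equivalent} to the superharmonicity of $\phi$, i.e.\ to $Q := \nabla^2\phi \le 0$ in $D$. Substituting $v = \E^{-\mu\phi}$ into \eqref{MHh} yields the eikonal-type equation $|\nabla\phi|^2 - \mu^{-1}\nabla^2\phi = 1$ in $D$, with $\phi = 0$ on $\partial D$ by \eqref{bc}. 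The whole proof thus reduces to showing $Q\le 0$.

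First I would treat the interior. Applying the Laplacian to the equation for $\phi$ and using the Bochner identity $\nabla^2|\nabla\phi|^2 = 2\,|D^2\phi|^2 + 2\,\nabla\phi\cdot\nabla(\nabla^2\phi)$, where $D^2\phi$ denotes the Hessian, gives
\[ \mu^{-1}\nabla^2 Q - 2\,\nabla\phi\cdot\nabla Q = 2\,|D^2\phi|^2 \ge 0 \quad \text{in } D . \]
By elliptic regularity $v$, hence $\phi$ and $Q$, are smooth up to $\partial D$, so $Q$ is a subsolution of a uniformly elliptic operator with bounded coefficients and no zeroth-order term; consequently $Q$ attains its maximum over $\bar D$ on $\partial D$. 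It therefore suffices to prove $Q\le 0$ on $\partial D$. Since $v\equiv 1$ there, the tangential derivatives vanish and $\nabla^2 v = \mu^2$, whence a direct computation gives $Q = \mu^{-1}(\partial_n v)^2 - \mu$ on $\partial D$, with $\partial_n$ the inward normal derivative. Thus the required boundary inequality is exactly $|\partial_n v|\le \mu$: the normal derivative of $v$ may not be steeper than that of the half-plane profile $\E^{-\mu x_2}$.

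Here the convexity of $D$ enters. For a convex domain the distance function $d(\cdot,\partial D)$ is \emph{concave} on $D$, so $w := \E^{-\mu d}$ is Lipschitz (hence in $H^1(D)$) and, wherever $d$ is twice differentiable, $\nabla^2 w - \mu^2 w = -\mu\,w\,\nabla^2 d \ge 0$ because $\nabla^2 d \le 0$; the singular part of the Hessian of the concave function $d$ is a nonpositive measure and contributes with the same favourable sign, so $w$ is a weak subsolution of $\nabla^2 - \mu^2$. Since $w = 1 = v$ on $\partial D$, the comparison principle for this operator (its zeroth-order coefficient $-\mu^2$ being nonpositive) yields $v \ge w$ throughout $D$. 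As $v - w \ge 0$ vanishes on $\partial D$ and both functions are smooth in a boundary collar, where $d$ is $C^2$, its inward normal derivative there is nonnegative, giving $\partial_n v \ge \partial_n w = -\mu$ and hence $|\partial_n v|\le \mu$. This is the desired boundary inequality, so $Q\le 0$ on $\partial D$ and therefore in $D$, which is precisely \eqref{cond}.

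I expect the main obstacle to be the rigorous handling of the barrier $w = \E^{-\mu d}$, since $d$ is only Lipschitz and its Hessian is singular along the medial axis of $D$; one must justify that $w$ is a genuine weak (or viscosity) subsolution and that comparison applies across that singular set. The favourable sign supplied by concavity makes this plausible, and one may alternatively localize the comparison to the boundary collar $\{d < \delta\}$, where $d$ is $C^2$. It is worth noting that none of the three steps uses largeness of $\mu$: the argument in fact delivers \eqref{cond} for \emph{every} $\mu>0$, in agreement with the two elementary examples preceding the conjecture, and thereby proves a statement slightly stronger than the one conjectured.
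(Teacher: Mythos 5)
You should note at the outset that the paper contains no proof of this statement: it is posed precisely as an open conjecture (the converse of Theorem~2.3), so there is nothing of the author's to compare your argument against, and it must be judged on its own merits. Judged so, it holds up. (i) The equivalence of \eqref{cond} with $Q:=\nabla^2\phi\le 0$, where $\phi=-\mu^{-1}\log v$, is exactly the identity from the paper's proof of Theorem~2.3 read in both directions, using $v>0$. (ii) The Bochner computation $\mu^{-1}\nabla^2 Q-2\,\nabla\phi\cdot\nabla Q=2\,|D^2\phi|^2\ge 0$ is correct, and since $v$ is smooth and bounded away from zero on $\bar D$, the drift $\nabla\phi$ is bounded, so the weak maximum principle does push the maximum of $Q$ to $\partial D$; your $Q$ is a Payne-type $P$-function, $Q=(|\nabla v|^2-\mu^2v^2)/(\mu v^2)$, and the weight $v^{-2}$ is what removes the usual caveat about critical points of $v$. (iii) On $\partial D$ indeed $Q=\mu^{-1}(\partial_n v)^2-\mu$, so everything reduces to the boundary bound $|\partial_n v|\le\mu$, and convexity enters only there. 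As you say, largeness of $\mu$ is never used, so the argument gives the conjecture in the stronger form, for all $\mu>0$.

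Two repairs are needed. First, a small one: comparison only gives $\partial_n v\ge-\mu$, and passing to $|\partial_n v|\le\mu$ silently uses $\partial_n v\le 0$; this is true but should be said\,---\,by \eqref{ineq}, $v\le 1$ in $D$ with equality on $\partial D$, so the inward normal derivative is nonpositive. Second, the obstacle you flag (the barrier $w=\E^{-\mu d}$ as a weak subsolution across the medial axis) can be bypassed entirely, which is safer than the chain-rule-for-measures argument and much safer than your collar alternative (on the inner boundary of a collar you have no a priori lower bound for $v$, so that localization founders). Instead, for convex $D$ every supporting half-plane $H\supset D$ carries Duffin's classical solution $\E^{-\mu\,\mathrm{dist}(x,\partial H)}$ of \eqref{MHh} (the first example in Section~2.3 of the paper); it is $\le 1=v$ on $\partial D$, so the classical comparison principle for $\nabla^2-\mu^2$ gives $v(x)\ge\E^{-\mu\,\mathrm{dist}(x,\partial H)}$ in $D$. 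Taking the supremum over supporting half-planes and using $d(x,\partial D)=\inf_H \mathrm{dist}(x,\partial H)$ for convex $D$\,---\,the same device as in the paper's sketch of Theorem~3.1\,---\,yields $v\ge\E^{-\mu d}$ with no measure-theoretic issues, whence $v(x_0+tn)\ge\E^{-\mu t}$ and $\partial_n v(x_0)\ge-\mu$ at every $x_0\in\partial D$. With these two repairs your argument appears to be a complete proof of the conjecture, consistent with both examples in Section~2.3, and it would be worth writing up carefully.
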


It is not unlikely that this assertion can be true for every $m \geq 2$.

\vspace{2mm}

{\bf 2.4. A local version of Theorem 2.1.} Let $F \neq \emptyset$ denote a proper,
closed subset of $\RR^2$; for every $x \in \RR^2$ the distance function from $F$ is
\begin{equation}
d (x, F) = \inf_{y \in F} |x-y| \, , \label{df}
\end{equation}
which is analogous to \eqref{d}. It occurs that this function characterizes the
convexity of $F$ in the same way as the distance function $d (\cdot, \partial D)$
does it for the domain $D$; namely, we have

\begin{theorem}[Parker, \cite{Par}]
Let $D \subset \RR^2$ be a domain such that $F \subset D$. Then the distance function $d
(\cdot, F)$ is subharmonic in $D$ if and only if $F$ is convex.
\end{theorem}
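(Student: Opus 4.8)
The plan is to read the statement as the assertion that, for a distance function, subharmonicity and convexity coincide. I would first dispose of the ``if'' direction, which is elementary: when $F$ is convex and $p,q$ are nearest points of $F$ to $x,y$, convexity gives $tp+(1-t)q\in F$, whence
\[
d\bigl(tx+(1-t)y,F\bigr)\leq |t(x-p)+(1-t)(y-q)|\leq t\,d(x,F)+(1-t)\,d(y,F),
\]
so $d(\cdot,F)$ is convex on all of $\RR^2$ and hence subharmonic, in particular in $D$. Conversely, since $F=\{x:d(x,F)\leq 0\}$ is a sublevel set, convexity of $d(\cdot,F)$ would already force $F$ to be convex; thus the real content of the theorem is that \emph{for a distance function, subharmonicity in $D$ entails convexity}. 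The reason this can hold in the plane, and the engine of the proof, is the eikonal identity $|\nabla d(\cdot,F)|=1$, valid wherever $d$ is differentiable off $F$.

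For the ``only if'' direction I would argue by contraposition, showing that a nonconvex $F$ forces the sub--mean--value inequality to fail somewhere in $D$. The smooth heart of the matter is a two--dimensional observation. At points where $d(\cdot,F)$ is twice differentiable (the distance function is locally semiconcave off $F$, so this holds almost everywhere by Alexandrov's theorem), differentiating $|\nabla d|^2=1$ gives $D^2 d\,\nabla d=0$, so $\nabla d$ is a null eigenvector of the Hessian; in the plane the only remaining eigenvalue then equals the trace $\nabla^2 d$. Hence subharmonicity, $\nabla^2 d\geq 0$, forces both eigenvalues to be nonnegative, i.e. $D^2 d\succeq 0$, upgrading subharmonicity to local convexity; this is exactly the step where $m=2$ is used. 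At the non--differentiability locus the same conclusion must instead be read through the averaging inequality: near a point $x_0$ with two distinct nearest points $p_1,p_2\in F$ one has $d(x,F)\leq\min(|x-p_1|,|x-p_2|)$, and expanding the right--hand side about $x_0$ with the unit vectors $u_i=(x_0-p_i)/d(x_0,F)$ gives the disc--average over $B_\rho(x_0)$ equal to $d(x_0,F)-c\,\rho+O(\rho^2)$, where $c=\tfrac{1}{2\pi}\int_{B_1}|(u_1-u_2)\cdot\eta|\,\D\eta>0$ since $p_1\neq p_2$; for small $\rho$ this is strictly below $d(x_0,F)$, contradicting the sub--mean--value property.

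What remains, and what I expect to be the main obstacle, is to place such a witness inside $D$ and to pass from this local information to global convexity of $F$, the difficulty being precisely that $D$ need not be convex and $F$ need not be smooth. I would split on connectivity. If $F$ is connected, the Tietze--Nakajima theorem (a closed connected locally convex set is convex) shows that nonconvexity produces a boundary point at which local convexity fails; the Hessian/averaging analysis then defeats subharmonicity in an arbitrarily small disc about that point, which lies in the neighbourhood of $\partial F$ that is automatically contained in the open set $D\supset F$. If $F$ is disconnected, I would take nearest points $a,b$ of two distinct components: their perpendicular bisector separates $a$ from $b$, and since $D$ is connected and contains both, $D$ must meet this bisector at a point which is equidistant from the two components and hence a genuine two--nearest--point point of $d(\cdot,F)$ lying in $D$, where the averaging estimate again kills subharmonicity. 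The delicate technical points to secure are that the chosen ridge point really has $a,b$ (and nothing nearer) as its nearest points, and that ``$D^2 d\succeq 0$ almost everywhere together with the ridge estimate'' can be assembled into honest convexity of the sublevel set $F$; one may hope to route this through Theorem~2.1, but the superharmonic--versus--subharmonic mismatch makes a direct reduction awkward, so I would prefer the self--contained argument above.
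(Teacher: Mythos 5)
Your ``if'' direction is fine, and you correctly locate the two local mechanisms and the place where $m=2$ enters: the ridge-point averaging argument (this is exactly the Armitage--Kuran device the paper uses for Theorem~3.1), and the eikonal identity forcing $\nabla d$ to be a null eigenvector of the Hessian, so that in the plane $\nabla^2 d \geq 0$ makes the whole Hessian positive semidefinite at points of twice differentiability. This is consistent with the paper's three-dimensional counterexample (for the circle in $\RR^3$ the Hessian of $d(\cdot,F)$ has a negative eigenvalue while its trace is positive). However, your key inference --- that the a.e.\ Hessian computation ``upgrades subharmonicity to local convexity'' of $d$ --- is invalid as stated. Differentiability everywhere off $F$ together with a positive semidefinite Alexandrov Hessian at almost every point does \emph{not} imply local convexity: already in one variable, a primitive of minus the Cantor function is $C^1$, has vanishing second derivative almost everywhere, yet is concave. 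The a.e.\ Hessian misses the singular part of the distributional Hessian $D^2 d$, which is exactly where non-convexity can hide. The hole can be closed, but only by using semiconcavity for more than Alexandrov's theorem: local semiconcavity of $d$ off $F$ makes the singular part of $D^2 d$ negative semidefinite, subharmonicity makes its trace a nonnegative measure, hence the singular part vanishes and $D^2 d \succeq 0$ distributionally. None of this appears in your text.

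The second gap is the one you flag yourself, and it is not a technicality --- it is the theorem. Even granting that $d$ is locally convex on $D\setminus F$ and ridge-free, your connected case says only that ``the Hessian/averaging analysis then defeats subharmonicity'' near a point where $F$ fails to be locally convex; that is an assertion, not an argument. (A correct bridge exists: if $a,b\in F$ lie in a small disc about such a point and some $c\in(a,b)$ has $d(c)>0$, then on the component of $[a,b]\setminus F$ containing $c$ the function $d$ is convex, vanishes at both endpoints and is positive inside --- a contradiction; this gives local convexity of $F$, after which Tietze--Nakajima applies. But this step is absent.) Your disconnected case is also leaky: the equidistant point on the path need not be a ridge point, since its nearest points in $F$ may lie in a third component; the clean fix is that ridge-freeness makes the nearest-point map $\pi : D \to F$ continuous, whence $F=\pi(D)$ is automatically connected. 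Finally, note that the paper itself does not reprove this result: it records that Parker's route goes through the local Motzkin-type theorem (Theorem~2.6) and the Krein--Milman theorem, so your plan is genuinely different --- and beware that the tangent-ball conclusion of Theorem~2.6 alone cannot suffice even in the plane (a smooth horseshoe has exterior tangent balls at every boundary point), so any correct proof, yours included, must combine curvature-sign information with a real local-to-global argument. As it stands, your proposal proves only the easy direction.
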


The proof is substantially more complicated than that of Armitage and Kuran's
result. Indeed, along with some simple auxiliary assertions it requires applying
the Krein--Milman theorem (see \cite[Theorem~11.5]{V}), as well as the following
local Motzkin-type theorem.

\begin{theorem}[Parker, \cite{Par}]
Let $F \neq \emptyset$ be proper, closed subset of\/ $\RR^m$, $m \geq 2$. If for
$x^* \in \partial F$ and some $r > 0$ every point in $B_r (x^*)$ has a unique
nearest point in $F$, then there exists an open ball of radius $r$ which touches $F$
at $x^*$; that is, this ball has no common points with $F$, but $x^*$ belongs to the
ball's boundary.
\end{theorem}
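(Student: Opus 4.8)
The plan is to produce the required ball as a limit of \emph{empty} balls that roll outward from $F$ while staying tangent to it. Since $x^* \in \partial F$ and $F$ is closed, I would first pick points $z_n \in B_r (x^*) \setminus F$ with $z_n \to x^*$. By hypothesis each $z_n$ has a unique nearest point $p_n \in F$; writing $\delta_n = |z_n - p_n| = d (z_n, F)$ and $u_n = (z_n - p_n)/\delta_n$, one has $\delta_n \le |z_n - x^*| \to 0$, whence $p_n \to x^*$. The open ball $B_{\delta_n} (z_n)$ misses $F$ and carries $p_n$ on its boundary.

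Next I would enlarge these balls. For a unit vector $u$ and a foot point $p \in F$ the balls $B_t (p + t u)$ are mutually tangent at $p$ and increase with $t$: indeed $x \in B_t (p + t u)$ iff $|x - p|^2 < 2 t \, (x - p) \cdot u$, a condition that relaxes as $t$ grows, so $B_s (p + s u) \subset B_t (p + t u)$ for $s < t$. Starting from $p_n$ in the direction $u_n$, set $c_n (t) = p_n + t u_n$ and push $t$ upward. The center leaves the uniqueness region at the exit time $\tau_n$, where $|c_n (\tau_n) - x^*| = r$; since $p_n \to x^*$ one gets $\tau_n \to r$. The crucial claim is that, for every $n$, the ball stays empty throughout: $B_t (c_n (t)) \cap F = \emptyset$ for all $t \in (0, \tau_n)$.

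To establish this I would argue by contradiction and inspect the first radius $t_* \in (0, \tau_n)$ at which emptiness fails. By the nesting property the closed ball $\overline{B_{t_*} (c_n (t_*))}$ then meets $F$ while its interior does not; hence $F$ meets the boundary sphere, $d (c_n (t_*), F) = t_*$, and $p_n$ is among the nearest points. Now the decisive dichotomy appears. If some other point $q \in F$, $q \ne p_n$, lies on that sphere, then $c_n (t_*)$ has two distinct nearest points; as $c_n (t_*) \in B_r (x^*)$, this contradicts the hypothesis, and the claim follows. The remaining, degenerate possibility is that $F$ meets the critical sphere \emph{only} at $p_n$ while accumulating at $p_n$ tangentially from outside the ball, so that $p_n$ remains the unique nearest point of $c_n (t_*)$ and yet points of $F$ slip into every slightly larger ball. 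Excluding this tangential degeneracy is, I expect, the main obstacle. I would treat it by a rolling argument: keeping $p_n$ on the boundary, tilt the center $c_n (t_*)$ by a small angle toward the direction in which $F$ accumulates, and follow the first contact of the resulting radius-$t_*$ ball with $F$; the required tilt tends to $0$ as the accumulating points approach $p_n$, so this contact occurs at a center still lying in $B_r (x^*)$ and exhibits a nearest point distinct from $p_n$, again contradicting uniqueness. The delicate step is to rule out that this first contact point itself slides back to $p_n$, and it is precisely here that the full strength of the hypothesis (unique nearest points throughout $B_r (x^*)$, i.e. local reach at least $r$) must be exploited, controlled by the elementary extremality inequality $(x - \xi) \cdot (b - \xi) \le \tfrac12 |b - \xi|^2$, valid for all $b \in F$ when $\xi$ is the nearest point of $x$.

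Granting the claim, for each $n$ the balls $B_t (c_n (t))$ with $t \uparrow \tau_n$ are empty and tangent to $F$ at $p_n$, so in the limit $B_{\tau_n} (c_n (\tau_n))$ is empty with $|c_n (\tau_n) - x^*| = r$. Finally I would let $n \to \infty$: passing to a subsequence, $c_n (\tau_n) \to c^*$ with $|c^* - x^*| = r$ and $\tau_n \to r$. The limiting open ball $B_r (c^*)$ misses $F$, since any $y \in B_r (c^*) \cap F$ would lie in $B_{\tau_n} (c_n (\tau_n))$ for large $n$, which is impossible; and $|c^* - x^*| = r$ places $x^*$ on its boundary. As $x^* \in F$, the ball $B_r (c^*)$ is the desired one touching $F$ at $x^*$.
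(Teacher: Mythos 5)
The paper itself states this theorem without proof (it is simply quoted from Parker's article), so your proposal has to stand on its own — and it does not quite. Your skeleton is the right one, and most of it is sound: growing the tangent balls $B_t(c_n(t))$, $c_n(t)=p_n+tu_n$, the nesting identity $|x-p_n|^2<2t\,(x-p_n)\cdot u_n$, the reduction to a first critical radius $t_*$, and the two limit passages at the end are all correct. But the entire difficulty of the theorem sits in the case you label degenerate and do not resolve: at the critical radius, $F$ may meet the critical sphere only at $p_n$, while points of $F$ enter every slightly larger ball, approaching $p_n$ tangentially. Your tilting argument cannot dispose of this case as described, and you concede as much. The mechanism is blocked in principle: rotating the radius-$t_*$ ball about $p_n$ and locating a ``first contact'' yields a contradiction only if the contact point differs from $p_n$ — i.e., it is obstructed by exactly the same degeneracy, now for the rotated ball. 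And exhibiting nearby centers whose (unique) nearest point is merely \emph{different} from $p_n$ contradicts nothing: uniqueness is a pointwise hypothesis, and it is perfectly consistent — indeed automatic, by continuity of the nearest-point map on the uniqueness region — that centers near $c_n(t_*)$ have nearest points near, but distinct from, $p_n$. The extremality inequality you invoke, applied at $c_n(t_*)$ and at $c_n(t)$ with $t>t_*$, gives precisely the two-sided bound $\rho_t^2/(2t)< u_n\cdot(\xi_t-p_n)\le \rho_t^2/(2t_*)$ for the nearest point $\xi_t$ of $c_n(t)$, where $\rho_t=|\xi_t-p_n|$; this is exactly the tangential regime and is self-consistent, so no contradiction can follow from it alone.

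What is missing is a genuine continuation principle, and that is where Federer's and Parker's arguments do real work. One standard route: the function $y\mapsto |y|^2-d(y,F)^2=\sup_{b\in F}\,\bigl(2\,y\cdot b-|b|^2\bigr)$ is convex; the uniqueness hypothesis makes its subdifferential at every $y\in B_r(x^*)$ the singleton $\{2\,\xi(y)\}$, hence it is differentiable there, and a convex function differentiable on an open set is automatically $C^1$ there. Consequently $d(\cdot,F)$ has a \emph{continuous} unit gradient $\nabla d(y)=(y-\xi(y))/d(y,F)$ on $B_r(x^*)\setminus F$. Integrating this continuous field (Peano) produces curves along which $d$ grows at unit speed; since $d$ is $1$-Lipschitz and the curves have unit speed, they must be straight rays — which is precisely your claim that $d(c_n(t),F)=t$ up to the exit time $\tau_n$. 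Until an ingredient of this kind is supplied, your proof has a hole at its mathematical core, not a routine detail left to the reader.
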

 
There is a very simple counterexample to Theorem~2.5 in three dimensions, and it can
be easily extended to higher dimensions. Indeed; let $(\rho, \theta, z)$ be
cylindrical coordinates in $\RR^3$, and let
\[ F = \{ \rho = 1 , \, z = 0 \} ;
\]
that is, $F$ is the unit circumference in the plane $\{ z = 0 \}$, which is
obviously not convex. Calculating the Laplacian of the axisymmetric distance
function
\[ d (\rho, \theta, z, F) = \sqrt{(\rho - 1)^2 + z^2} \, ,
\]
one immediately gets
\[ d_{\rho \rho} + \rho^{-1} d_\rho + d_{zz} = (2 \rho - 1) / (\rho d) \ \ \mbox{for}
\ \rho \neq 0 .
\]
Hence $d (\cdot, F)$ is subharmonic in the domain $\{ \rho > 1/2 , z \in \RR \}$
containing the nonconvex set $F$, which demonstrates that Theorem~2.5 is not valid
for domains of dimensions higher than two.

{\centering \section{The convexity of a set in arbitrary dimension} }

\noindent At the beginning of this section, we consider results similar to those
presented in Sections~2.1 and 2.4. Thereupon, an inequality characterizing the
convexity of a bounded domain $D$ with smooth boundary is described; it involves a
bilinear form defined on exterior, unit normals on~$\partial D$.

\vspace{2mm}

{\bf 3.1. The convexity via the subharmonicity of the distance function.} Let $F \neq
\emptyset$ be a pro\-per, closed subset of $\RR^m$, $m \geq 2$, then the distance $d
(x, F)$ from $F$ is defined by \eqref{df} for every $x = (x_1, \dots, x_m) \in
\RR^m$. The following assertion extends Theorem~2.5 to higher dimensions.

\vspace{1mm}

\begin{theorem}[Armitage and Kuran, \cite{AK}]
Let the closed set $F \subset \RR^m$, $m \geq 2$, be proper. Then the distance
function $d (\cdot, F)$ is subharmonic in $\RR^m \setminus F$ if and only if $F$ is
convex.
\end{theorem}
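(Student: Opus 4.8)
The plan is to prove the two implications separately, disposing of the easy direction by a convexity argument and treating the converse---the hard direction---by contraposition, producing an explicit violation of the sub-mean-value inequality. For sufficiency (\emph{$F$ convex $\Rightarrow d(\cdot,F)$ subharmonic}) I would first show that $d(\cdot,F)$ is in fact a \emph{convex} function on all of $\RR^m$: given $x,x'$ with respective nearest points $y,y'\in F$, convexity of $F$ puts $t y+(1-t)y'$ in $F$, so that $d(tx+(1-t)x')\le |t(x-y)+(1-t)(x'-y')|\le t\,d(x)+(1-t)\,d(x')$ by the triangle inequality. A convex function is subharmonic, since its spherical means dominate its central value by Jensen's inequality, and this holds \emph{a fortiori} on the open set $\RR^m\setminus F$. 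This direction is routine.

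For necessity I would argue by contraposition, assuming $F$ is \emph{not} convex and exhibiting a point of $\RR^m\setminus F$ at which the sub-mean-value inequality fails. The key input is Motzkin's theorem (Theorem~1.1): since $F$ is closed but not convex, some point $x_0$ has two \emph{distinct} nearest points $z_1,z_2\in F$, both at distance $\rho:=d(x_0,F)>0$. As every point of $F$ is its own unique nearest point, necessarily $x_0\in\RR^m\setminus F$ and $B_\rho(x_0)\cap F=\emptyset$. The idea is to dominate $d$ near $x_0$ by the two-cone function $g(x)=\min\{|x-z_1|,\,|x-z_2|\}$, for which $d\le g$ everywhere while $d(x_0)=g(x_0)=\rho$, and to show that $g$ itself strictly violates the sub-mean-value inequality at $x_0$.

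The quantitative heart of the argument is a first-order expansion on a small sphere $S_r(x_0)\subset\RR^m\setminus F$ (with $r<\rho$). Writing $u_i=(z_i-x_0)/\rho$, one has $|x-z_i|=\rho-u_i\cdot(x-x_0)+O(r^2)$, hence $g(x)=\rho-\max\{u_1\cdot(x-x_0),\,u_2\cdot(x-x_0)\}+O(r^2)$. Using the identity $\max\{a,b\}=\tfrac12(a+b)+\tfrac12|a-b|$ together with the vanishing of the spherical mean of a linear functional, the average of $g$ over $S_r(x_0)$ becomes
\[
\mathrm{avg}_{S_r(x_0)}\,g=\rho-\tfrac12\,\mathrm{avg}_{S_r(x_0)}\big(|(u_1-u_2)\cdot(x-x_0)|\big)+O(r^2).
\]
Since $z_1\neq z_2$ forces $u_1\neq u_2$, the displayed average is a strictly positive quantity of exact order $r$, so for all sufficiently small $r$ the mean of $g$---and therefore of $d\le g$---over $S_r(x_0)$ is strictly less than $\rho=d(x_0)$. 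This contradicts the subharmonicity of $d$ on $\RR^m\setminus F$, completing the contrapositive.

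The main obstacle is this necessity direction, and within it the point is to dispense with any smoothness hypothesis on $\partial F$, so that one cannot simply compute $\nabla^2 d$ in terms of principal curvatures of the boundary. Invoking Motzkin's theorem to manufacture the two-nearest-point configuration, and then controlling $d$ from above by the explicit, piecewise-smooth function $g$, is precisely what makes the mean-value estimate rigorous for an arbitrary closed set. The only care needed is to verify that the $O(r^2)$ remainder is genuinely dominated by the $\Theta(r)$ gain arising from the downward kink of $g$ across its ridge, which is immediate once the linear term is shown to be strictly negative on average.
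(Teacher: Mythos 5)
Your proposal is correct and follows essentially the same route as the paper: for the hard (necessity) direction you reproduce exactly the paper's argument---Motzkin's theorem (Theorem~1.1) yields a point with two distinct nearest points $z_1, z_2 \in F$, the cone function $g(x)=\min\{|x-z_1|,|x-z_2|\}$ dominates $d(\cdot,F)$ with equality at that point, and its spherical means drop strictly below the central value---and your first-order expansion in fact fills in the step the paper dismisses as ``easy to show.'' The only divergence is in the routine sufficiency half, where you deduce subharmonicity from the convexity of $d(\cdot,F)$ via Jensen's inequality, while the paper instead writes $d(\cdot,F)=\sup_H d(\cdot,H)$ as a supremum of harmonic signed distances to supporting hyperplanes; both arguments are standard and equally valid.
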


\vspace{-5mm}
  
\begin{proof}[A sketch of proof]
If $F$ is convex, then the signed distance $d (\cdot, H)$ (it is measured from every
its support hyperplane $H$ so that it is negative in the interior of $F$) is
harmonic in $\RR^m$; cf. Section~2.2. Moreover, we have that $d (\cdot, F) = \sup_H
d (\cdot, H)$, and so this function is subharmonic in $\RR^m \setminus F$, because
$d (\cdot, H)$ is harmonic.

To prove the ``only if'' part of theorem's assertion, let us assume that $F$ is
nonconvex. Then according to Theorem~1.1, there are two distinct points in $F$, say
$y_1$ and $y_2$, equidistant from some point in $\RR^m \setminus F$; without loss of
generality, the origin can be taken as this point, and so $|y_1| = |y_2| > 0$.
Putting
\[ v (x) = \min \{ |x - y_1| , \, |x - y_2| \} \ \ \mbox{for every} \ x \in \RR^m ,
\]
it is easy to show that there exists $r_* > 0$ such that
\[ v (0) > M (v, r) \ \ \mbox{for every} \ r \in (0, r_*) .
\]
Here $M (v, r)$ stands for the mean value of $v$ over the sphere $\{ x \in \RR^m :
|x| = r \}$. Since
\[ v (x) \geq d (x, F) \ \ \mbox{for every} \ x \in \RR^m \setminus F 
\]
with equality valid at the origin, we see that
\[ d (0, F) = v (0) > M (v, r) \geq M (d (\cdot, F), r) \, ,
\]
and so $d (\cdot, F)$ is not subharmonic in $\RR^m \setminus F$.
\end{proof}

{\bf 3.2. Characterization of convexity via an inequality for a bilinear form.} The
recent preprint \cite{S} begins with the following heuristic observation.

Let $D \subset \RR^m$, $m \geq 2$, be a bounded $C^1$-domain. If two points $x, y
\in \partial D$ are close, then the normals $n_x, n_y$ at these points are almost
parallel to each other and both of them are roughly orthogonal to $x-y$. However,
the normal turns quickly in a region, where the curvature is large, but along with
these regions there are ``flatter'' regions on the boundary of a convex domain.
Therefore, it might all average out in this case.

The quantitative result based on this argument (it can be interpreted as a global
conservation law for convex domains) is as follows.

\begin{theorem}[Steinerberger, \cite{S}]
Let $D \subset \RR^m$, $m \geq 2$, be a bounded $C^1$-domain. Then there exists a
constant $C_m > 0$ such that
\begin{equation}
\int_{\partial D \times \partial D} \frac{| (n_x, y-x) (y-x, n_y) |}{|x-y|^{m+1}} \,
\D S_x \D S_y \geq C_m |\partial D| \label{s}
\end{equation}
with equality taking place if and only if $D$ is convex.
\end{theorem}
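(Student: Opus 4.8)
The plan is to turn the double boundary integral in \eqref{s} into an integral over the space of affine lines, and then to compare it with the Crofton formula for $|\partial D|$. Writing $\omega = (y-x)/|x-y|$ and $t = |x-y|$, the two scalar products become $(n_x, y-x) = t\,(n_x,\omega)$ and $(y-x, n_y) = t\,(n_y,\omega)$, so the integrand reduces to $|(n_x,\omega)|\,|(n_y,\omega)|\,t^{-(m-1)}$. I would first fix $x \in \partial D$ and change the inner variable from $y$ to the direction $\omega \in S^{m-1}$ in which $y$ is seen from $x$. The relevant Jacobian is the elementary solid-angle relation $\D\omega = |(n_y,\omega)|\,t^{-(m-1)}\,\D S_y$; inverting it and summing over all points $y = x + t\omega \in \partial D$ with $t>0$ on the forward ray from $x$, the factors $t^{m-1}$ and $|(n_y,\omega)|$ cancel identically and the inner integral collapses to
\[
\int_{S^{m-1}} |(n_x,\omega)|\, N^{+}(x,\omega)\,\D\omega ,
\]
where $N^{+}(x,\omega)$ counts the points at which the ray $\{x + t\omega : t>0\}$ meets $\partial D$.

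Next I would fix the direction $\omega$ and integrate over $x$. Orthogonal projection of $\partial D$ onto the hyperplane $\omega^{\perp}$ has Jacobian $|(n_x,\omega)|$, so the weight $|(n_x,\omega)|\,\D S_x$ turns into Lebesgue measure $\D p$ on $\omega^{\perp}$. For a line $\{p + s\omega : s \in \RR\}$ meeting $\partial D$ in the $k = k(\omega,p)$ points ordered along $\omega$, the $i$-th of them sees exactly $k-i$ points ahead of it, and $\sum_{i=1}^{k}(k-i) = \binom{k}{2}$. Hence the whole expression equals $\int_{S^{m-1}}\int_{\omega^{\perp}}\binom{k(\omega,p)}{2}\,\D p\,\D\omega$, with $\omega$ ranging over the full sphere so that every line is accounted for consistently. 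The Crofton formula, on the other hand, gives $\int_{S^{m-1}}\int_{\omega^{\perp}} k(\omega,p)\,\D p\,\D\omega = \kappa_m\,|\partial D|$, where $\kappa_m = \int_{S^{m-1}}|\omega_1|\,\D\omega > 0$.

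For a bounded domain almost every line is transverse to $\partial D$ and therefore meets it in an even number of points. Granting this, the pointwise inequality $\binom{k}{2} \ge k/2$, valid for every even $k \ge 0$ with equality precisely when $k \in \{0,2\}$, yields
\[
\int_{\partial D \times \partial D}\frac{|(n_x,y-x)(y-x,n_y)|}{|x-y|^{m+1}}\,\D S_x\,\D S_y = \int_{S^{m-1}}\int_{\omega^{\perp}}\binom{k}{2}\,\D p\,\D\omega \ge \frac{\kappa_m}{2}\,|\partial D| ,
\]
so that \eqref{s} holds with $C_m = \kappa_m/2 > 0$. Equality forces $k(\omega,p) \in \{0,2\}$ for almost every line, that is, every line section of $D$ is an interval, which is equivalent to convexity; conversely, if $D$ is convex then every line meets $\partial D$ in at most two points and equality is immediate.

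I expect the main obstacle to be twofold. First, making the two successive changes of variables rigorous: the solid-angle substitution and the orthogonal projection are local diffeomorphisms only away from the tangential directions $\{(n_x,\omega)=0\}$, so one must verify that the excluded directions carry no measure and that lines meeting $\partial D$ in infinitely many points are negligible; this is delicate under the mere $C^1$ hypothesis, where Sard-type genericity is not automatic and must be argued by hand. Second, the equality analysis needs the converse implication: if $D$ is not convex, one produces a set of \emph{positive} measure of lines with $k \ge 4$ by taking a chord joining two points of $D$ that leaves $D$, extending it to a line whose section of $D$ is disconnected, and observing that this disconnectedness\,---\,hence the bound $k \ge 4$\,---\,persists under small perturbations of the line, which makes the inequality strict.
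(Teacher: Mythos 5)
Your proposal is correct and is essentially the paper's (i.e., Steinerberger's) approach made explicit: the paper's outline consists of the Crofton formula \eqref{Cf}, Lemma~3.1 (almost every line meets $\partial D$ in $0$ or $2$ transversal points iff $D$ is convex), and the integrated inequality $N_{\partial D} \le N_{\partial D}^2/2$, and your two changes of variables reducing the bilinear integral to $\int \binom{k}{2}$ over line space are precisely the ``chain of tricky manipulations'' the paper leaves to \cite{S}, with your constant $C_m = \tfrac{1}{2}\int_{S^{m-1}} |\omega_1| \, \D \omega$ matching the paper's. The one point to tighten is the equality analysis: choose the nonconvexity chord so that it passes through the open exterior of $\bar D$ (possible because a $C^1$-domain satisfies $D = \mathrm{int}\, \bar D$, hence $\bar D$ is nonconvex as well), since a chord that merely leaves $D$ could touch only $\partial D$, in which case the bound $k \ge 4$ would not be stable under perturbation of the line.
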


Here $(\cdot, \cdot)$ is the inner product in $\RR^m$, whereas $\D S$ is the surface
measure on $\partial D$ and $|\partial D|$ is its total measure. If $D$ is the unit
ball, then the expression on the left-hand side of \eqref{s} reduces to
\[ \frac{\sqrt 2 \, (\pi / 2)^{m/2}}{\Gamma (m/2)} \int_{S^{m-1}} [1 - (x, w)]^{-(m-3)/2}
\D S_x \, , \quad w = (1, 0, \dots, 0) \in S^{m-1} ,
\]
which yields that $C_m = 2^{-1} \int_{S^{m-1}} |x_1| \, \D S$; in particular, $C_2 =
2$ and $C_3 = \pi$.

The proof of Theorem~3.2 given in the preprint \cite{S} relies essentially on
probabalistic technique; moreover, two assertions are crucial for this proof. The
first one is the Crofton formula for a rectifiable hypersurface in $\RR^m$ (see, for
example, \cite{Sa}); for our purpose it takes the form:
\begin{equation}
|\partial D| = \alpha_m \int_L N_{\partial D} (\ell) \, \D \varphi \, \D p \, ,
\quad \alpha_m = \frac{\Gamma ([m+1]/2)}{2 \, \pi^{(m-1)/2}} \, , \label{Cf}
\end{equation}
Here $L$ is the set of all oriented lines in $\RR^m$, the kinematic measure on
$S^{m-1} \times (0, \infty)$ is denoted $\D \varphi \, \D p$, and $N_{\partial D}
(\ell)$ stands for the number of transversal crossings of $\partial D$ by $\ell \in
L$. The second assertion is the following.

\vspace{-1mm}

\begin{lemma}
Let $D \subset \RR^m$, $m \geq 2$, be a bounded $C^1$-domain. Almost every (with
respect to the kinematic measure) line $\ell \in L$ either has no common points with
$\partial D$ or intersects it transversally at two points exactly if and only if $D$
is convex.
\end{lemma}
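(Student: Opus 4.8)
The plan is to prove both implications, relying throughout on the standard integral-geometric fact that for a $C^1$ hypersurface almost every line $\ell$ (with respect to the kinematic measure $\D \varphi \, \D p$) meets $\partial D$ in finitely many points, at each of which the crossing is transversal; since $D$ is bounded, the number of such crossings is then necessarily even. I would establish this ``generic transversality'' first, since it lets me discard the exceptional null-set of lines (those tangent to $\partial D$, or lying in a flat part of it) and reduces the lemma to the purely combinatorial statement: \emph{$D$ is convex if and only if almost every line has at most two transversal crossings with $\partial D$}.

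For the direction ``$D$ convex $\Rightarrow$ at most two crossings'', take any line with $\ell \cap \bar D \neq \emptyset$; by convexity $\ell \cap \bar D$ is a convex subset of $\ell$, hence a closed segment $[a,b]$. By generic transversality I may assume $\ell$ crosses $\partial D$ transversally wherever they meet; this already excludes the degenerate cases (a single contact point, or a segment lying in $\partial D$), since those are tangencies. For every remaining line meeting $\bar D$ the segment is nondegenerate with open part $(a,b) \subset D$, so $\ell \cap \partial D = \{a,b\}$: exactly two transversal crossings. Thus almost every line has either no common point with $\partial D$ or exactly two transversal crossings.

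For the converse I would argue by contraposition: assuming $D$ nonconvex, I would produce a set of lines of positive kinematic measure each having at least four transversal crossings with $\partial D$. Nonconvexity yields points $p, q \in D$ whose open segment leaves $D$; the line $\ell_0$ through them therefore enters $D$ (about $p$), exits, re-enters (about $q$), and exits again, as $D$ is bounded. The indicator of $D$ along $\ell_0$ thus changes value at least four times, so $\ell_0$ meets $\partial D$ in at least four points. Moving $p$ and $q$ slightly inside $D$ preserves the ``segment leaves $D$'' property on an open set of choices, and, applying generic transversality to the two-parameter family of lines joining small balls about $p$ and $q$, I may assume that all four crossings of $\ell_0$ are transversal. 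Each transversal crossing persists under small perturbation of the line: parametrizing the lines near $\ell_0$ and applying the implicit function theorem to a local defining function of $\partial D$, the four crossing points vary continuously and remain transversal. Hence an entire neighbourhood $U$ of $\ell_0$ in $L$ consists of lines with at least four transversal crossings, and $U$ has positive kinematic measure. This contradicts the hypothesis that almost every line has at most two transversal crossings, so $D$ must be convex.

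The step I expect to be the main obstacle is this last stability argument in the converse: arranging the four crossings of $\ell_0$ to be simultaneously transversal, and then promoting the single line $\ell_0$ to an open, positive-measure family. The delicacy is that under the bare $C^1$ hypothesis one cannot invoke a naive Sard argument and must instead lean on generic transversality to make the initial line transversal, and then exploit the \emph{openness} of transversal intersection\,---\,not convexity\,---\,to transfer the four-crossing count to a neighbourhood of $\ell_0$ in the space of lines.
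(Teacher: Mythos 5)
A preliminary remark: the paper gives no proof of this lemma at all---it is stated as an ingredient quoted from Steinerberger's preprint \cite{S}---so your proposal can only be measured against the natural argument, which is indeed the one you chose. Your architecture is right: generic transversality first, then the chord argument for the forward implication, then contraposition plus an openness argument for the converse. The forward implication is correct as written: for convex $D$ the set $\ell \cap \bar D$ is a chord, and genericity removes exactly the degenerate chords (a single touching point, or a chord having an interior point on $\partial D$, which by the supporting-hyperplane argument forces the whole chord into $\partial D$, hence tangency). Invoking generic transversality as standard is also legitimate for $C^1$ boundaries: finiteness almost everywhere follows from the Crofton formula \eqref{Cf}, and tangency avoidance follows from Sard's theorem (valid for $C^1$ maps between manifolds of equal dimension) applied to the orthogonal projections of $\partial D$ onto the hyperplanes $\varphi^\perp$, together with Fubini's theorem.

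The genuine gap is in the converse, at the sentence ``moving $p$ and $q$ slightly inside $D$ preserves the `segment leaves $D$' property on an open set of choices''. That claim is false as stated, and it fails precisely in the situation your perturbation is meant to handle: when $[p,q]$ leaves the open set $D$ only through a tangential contact with $\partial D$, never leaving $\bar D$. Concretely, let $\partial D$ near the origin of $\RR^2$ be the curve $x_2 = -x_1^4$ with $D$ locally above it; then $p = (-a,0)$ and $q = (a,0)$ lie in $D$ and $[p,q]$ leaves $D$ at the origin only, yet every segment $[p+(0,\delta),\, q+(0,\delta)]$ with $\delta>0$ lies entirely in $D$, so the property is not open. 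In the same configuration your crossing count collapses as well: the exit and the re-entry merge into one tangential touch, so $\ell_0$ meets $\partial D$ only three times (your step ``the indicator changes value four times, hence four points'' conflates changes with points), and nearby generic lines cross $\partial D$ only twice. The missing idea---and the place where the $C^1$ hypothesis must actually be used---is to upgrade the witness before perturbing: for a bounded $C^1$ domain one has $D = \operatorname{int} \bar D$, so if every segment with endpoints in $D$ stayed in $\bar D$, then $\bar D$ would be convex (pass to limits of endpoints) and so would $D = \operatorname{int} \bar D$; hence nonconvexity of $D$ yields $p, q \in D$ and $z \in [p,q]$ with $z \notin \bar D$. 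Since $\RR^m \setminus \bar D$ is open, this configuration is stable: every line meeting two small balls $B_\epsilon(p), B_\epsilon(q) \subset D$ contains a segment exiting $\bar D$. For almost every such line you then count crossings directly (not by ``persistence of the four crossings of $\ell_0$'', which you cannot guarantee): at least two transversal crossings between the balls, and at least one on each of the two rays beyond them because $D$ is bounded---at least four in all, on a set of lines of positive kinematic measure, contradicting the hypothesis. With that repair your proof is complete.
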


\vspace{-1mm}

As a consequence of this lemma and formula \eqref{Cf}, one obtains the inequality
\[ |\partial D| = \alpha_m \int_L N_{\partial D} (\ell) \, \D \varphi \, \D p \leq
\frac{\alpha_m}{2} \int_L \big[ N_{\partial D} (\ell) \big]^2 \, \D \varphi \, \D p
\, ,
\]
where equality takes place if and only if $D$ is convex. It occurs that \eqref{s}
follows from this inequality through a chain of tricky manipulations. A slight
modification of these considerations leads to the following.

\vspace{-1mm}

\begin{corollary}
Let $D \subset \RR^m$, $m \geq 2$, be a bounded, convex $C^1$-domain. Then
\[ \int_{\partial D} \frac{| (n_x, y-x) (y-x, n_y) |}{|x-y|^{m+1}} \, \D S_y = C_m
\quad \mbox{for every} \ x \in \partial D ;
\]
here $C_m$ is the same as in \eqref{s}.
\end{corollary}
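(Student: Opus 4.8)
The plan is to evaluate the integral directly by a central-projection (solid-angle) change of variables, turning the surface integral over $\partial D$ into an integral over the unit sphere $S^{m-1}$; convexity, via Lemma~3.1, enters precisely when one checks that this substitution sweeps out a hemisphere exactly once. First I would write $y - x = |y-x|\,\omega$ with $\omega = (y-x)/|y-x| \in S^{m-1}$, so that $(n_x, y-x) = |y-x|\,(n_x, \omega)$ and $(y-x, n_y) = |y-x|\,(\omega, n_y)$. The two surplus powers of $|y-x|$ cancel against the denominator, reducing the integrand to
\[ \frac{|(n_x, y-x)(y-x, n_y)|}{|x-y|^{m+1}} = |(n_x, \omega)| \cdot \frac{|(\omega, n_y)|}{|x-y|^{m-1}} \, . \]

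The next step is to recognize the trailing factor as a solid-angle element. For the central projection $y \mapsto \omega = (y-x)/|y-x|$ of the hypersurface $\partial D$ onto $S^{m-1}$ from the fixed point $x$, the standard Jacobian identity reads
\[ \frac{|(\omega, n_y)|}{|x-y|^{m-1}} \, \D S_y = \D \sigma(\omega) \, , \]
where $\D\sigma$ is the surface measure on $S^{m-1}$: the factor $|x-y|^{-(m-1)}$ rescales the $(m-1)$-dimensional measure from radius $|x-y|$ down to radius $1$, while $|(\omega, n_y)|$ accounts for the tilt of $\partial D$ against the radial direction. Consequently the integral in the corollary becomes $\int_{\omega(\partial D)} |(n_x, \omega)| \, \D \sigma(\omega)$, taken over the image of $\partial D \setminus \{x\}$ under the projection.

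It remains to identify this image and to confirm that the projection covers it exactly once; this is where convexity is indispensable and, I expect, the main point to make fully rigorous. For a convex $D$ and fixed $x \in \partial D$, every other boundary point $y$ is joined to $x$ by a chord lying in $\bar D$, so its direction satisfies $(n_x, \omega) \le 0$ (with $n_x$ the exterior normal); conversely, Lemma~3.1 guarantees that each ray from $x$ in an inward direction $\omega$ with $(n_x, \omega) < 0$ meets $\partial D$ in exactly one further point. Thus $y \mapsto \omega$ is, up to the measure-zero set of tangential directions and any flat portions of $\partial D$, a bijection of $\partial D \setminus \{x\}$ onto the hemisphere $\{ \omega \in S^{m-1} : (n_x, \omega) \le 0 \}$. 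In the nonconvex case a ray may re-enter and exit several times, so a single $\omega$ corresponds to several boundary points and the one-to-one correspondence fails; this is exactly the phenomenon that convexity rules out, and it is why the identity (rather than merely an inequality) holds.

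Finally I would compute the resulting hemisphere integral. The antipodal symmetry $\omega \mapsto -\omega$ of $S^{m-1}$, under which $|(n_x, \omega)|$ is invariant, shows the hemisphere integral equals one half of the full-sphere integral, and rotational invariance lets me align $n_x$ with $(1, 0, \dots, 0)$, so that $(n_x, \omega) = \omega_1$:
\[ \int_{(n_x, \omega) \le 0} |(n_x, \omega)| \, \D \sigma(\omega) = \frac{1}{2} \int_{S^{m-1}} |(n_x, \omega)| \, \D \sigma(\omega) = \frac{1}{2} \int_{S^{m-1}} |x_1| \, \D S \, . \]
The right-hand side is precisely the constant $C_m$ identified after Theorem~3.2, and it is manifestly independent of $x \in \partial D$, which yields the assertion. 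As a consistency check, this formula gives $\tfrac12\int_{S^1}|\cos\theta|\,\D\theta = 2$ and $\tfrac12\int_{S^2}|\omega_1|\,\D\sigma = \pi$, matching the stated values $C_2 = 2$ and $C_3 = \pi$.
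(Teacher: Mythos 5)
Your proof is correct, but it follows a genuinely different route from the paper. The paper (a survey) obtains the corollary as a byproduct of Steinerberger's proof of Theorem~3.2: the Crofton formula \eqref{Cf} for $|\partial D|$, Lemma~3.1, the resulting inequality $\alpha_m\int_L N_{\partial D}\,\D\varphi\,\D p\leq \tfrac{\alpha_m}{2}\int_L N_{\partial D}^2\,\D\varphi\,\D p$ with equality iff $D$ is convex, and then ``a chain of tricky manipulations'' carried out with probabilistic technique in \cite{S}. You instead compute the integral directly: the central projection $y\mapsto\omega=(y-x)/|y-x|$ from the fixed boundary point $x$, the Jacobian identity $|(\omega,n_y)|\,|x-y|^{-(m-1)}\D S_y=\D\sigma(\omega)$, and the fact that for convex $D$ this projection is (up to a null set) a bijection of $\partial D\setminus\{x\}$ onto the inward hemisphere $\{(n_x,\omega)\leq 0\}$, which reduces everything to $\tfrac12\int_{S^{m-1}}|\omega_1|\,\D\sigma$, i.e.\ exactly the constant $C_m$ identified after Theorem~3.2. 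In effect you disintegrate the Crofton-type argument over the pencil of lines through $x$, which makes the proof self-contained and elementary, at the price of being tied to the convex case (the paper's global route is what produces the inequality \eqref{s} for general domains, with the corollary as its equality case). One point to repair: you justify ``each inward ray meets $\partial D$ in exactly one further point'' by Lemma~3.1, but that lemma concerns almost every line with respect to the kinematic measure on \emph{all} lines, and the lines through the fixed point $x$ form a null set for that measure, so the lemma gives no information about them. Fortunately the fact you need is elementary and does not require the lemma: for $(n_x,\omega)<0$ the ray enters $D$ immediately (local $C^1$ graph representation), and by convexity $\{t>0: x+t\omega\in\bar D\}$ is an interval $(0,T]$ whose interior lies in $D$, so the ray meets $\partial D\setminus\{x\}$ precisely at $x+T\omega$; outward directions contribute nothing by the supporting-hyperplane inclusion $\bar D\subset\{z:(z-x,n_x)\leq 0\}$. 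With that substitution (plus the routine remark that tangential, zero-Jacobian crossings are negligible by the area formula and Sard's theorem), your argument is complete.
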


\vspace{-1mm}

Since the Crofton formula is known to be true in a very general setting (see
\cite{Sa}), some conditions imposed on $D$ in this section can be relaxed. Indeed,
an analogue of inequality \eqref{s} was recently obtained \cite{Bu} under the
assumption that $D$ is just a set of finite perimeter. We recall that $D$ is a set
of this kind provided the distributional gradient $\nabla \chi_D$ of its
characteristic function $\chi_D$ defines a finite measure on $\RR^m$; that is,
$\int_{\RR^m} |\nabla \chi_D| < \infty$. (An exposition of the theory of these sets
is given in the monograph \cite{Ma}.) It occurs that the generalized inequality (see
\cite[formula (1.3)]{Bu}) involves the so-called reduced boundary $\partial^* D$
defined for a set of finite perimeter; in particular, integration over this
boundary is used instead of the usual domain's boundary $\partial D$.

Another analytic characterization of convexity was obtained by Figalli and Jerison
\cite{FJ} also for a set $F$ of finite perimeter. Namely, the authors considered
the function
\[ \{ x \in \RR^m : (x, v) = 0 \} \ni u \mapsto W_v (u) = \int_\RR \chi_F (u + t v) \, 
\D t \in \RR \, , \ \ \mbox{where} \ v \in \RR^m \ \mbox{and} \ |v| = 1 ,
\]
and proved the following.

\begin{theorem}
Let $F \subset \RR^m$, $m \geq 2$, be a bounded set of finite perimeter. If the
function $W_v$ is log-concave for almost every $v$ with respect to the
$(m-1)$-dimensional Hausdorff measure, then $F$ is convex (up to a set of measure
zero).
\end{theorem}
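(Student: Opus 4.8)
The plan is to prove the stated implication by reducing the convexity of $F$ to a one-dimensional \emph{connectivity} property of its slices, and then to extract that property from the log-concavity hypothesis. Fix a unit vector $v$ and parametrise the line through $u \in \{ x : (x,v) = 0 \}$ in direction $v$ by $t \mapsto u + t v$; write $F_{v,u} = \{ t \in \RR : u + t v \in F \}$, so that $W_v (u) = \mathcal H^1 (F_{v,u})$. Because $F$ has finite perimeter, Vol'pert's slicing theorem guarantees that for $\mathcal H^{m-1}$-a.e.\ direction $v$ and a.e.\ $u$ the slice $F_{v,u}$ is a one-dimensional set of finite perimeter, hence coincides (up to an $\mathcal H^1$-null set) with a finite union of disjoint intervals whose endpoints lie on the reduced boundary $\partial^* F$ at points where the normal is not orthogonal to $v$. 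Two elementary consequences of log-concavity will be used. First, $\{ W_v > 0 \}$ is the projection of $F$ onto $v^\perp$ and a log-concave function has convex support, so every such projection is convex; this is necessary but, as the example of an annulus shows, far from sufficient. Secondly, the restriction of the log-concave $W_v$ to any line in $v^\perp$ is again log-concave, which reduces the heart of the matter to a planar analysis inside each two-plane $\Pi$ spanned by $v$ and a line in $v^\perp$.

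The key step is to show that, for a.e.\ $v$, almost every slice $F_{v,u}$ is (mod $\mathcal H^1$) a \emph{single} interval. I would argue by contradiction through what may be called a ``kink obstruction''. Suppose that, as $u$ moves along a line in $v^\perp$, a new component of the slice is born at a parameter $u_0$ interior to the support $\{ W_v > 0 \}$; that is, an extra interval of length $\ell (u) \to 0^+$ as $u \to u_0$ appears while the length $\ell_{\mathrm{old}} (u_0) > 0$ of the pre-existing part stays bounded away from zero. At such a birth the line in $\Pi$ becomes tangent to $\partial^* F$ and then crosses it, so that $\ell$ increases from $0$ with a strictly positive one-sided derivative (indeed infinite at a non-degenerate, square-root tangency). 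Consequently the one-sided derivatives of $W_v = \ell_{\mathrm{old}} + \ell$, and hence of $\log W_v$, jump \emph{upward} at $u_0$, contradicting the concavity of $\log W_v$. Therefore no component may be born in the interior of the support, and the number of components is locally constant there. Combined with the convexity of the support and a short argument ruling out a persistent multi-component configuration (the two components would have to be present over the same range of base points $u$ for the given $v$, forcing two disjoint portions of $F$ to share the same shadow on $v^\perp$, and disjoint sets can share all their shadows only for a negligible set of directions), this yields that for a.e.\ $v$ the slice is connected for a.e.\ $u$.

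The main obstacle lies precisely in making this kink argument rigorous for the possibly rough boundary of a finite-perimeter set: a component could in principle be born \emph{tangentially}, with super-linear growth of $\ell$, producing no first-order kink. This is where the hypothesis ``for a.e.\ $v$'' is essential rather than cosmetic. A degenerate tangency in the plane $\Pi$ is destroyed under an arbitrarily small rotation of the slicing direction, so the set of directions for which some slice exhibits a degenerate birth is $\mathcal H^{m-1}$-negligible; discarding it, one may assume all births are non-degenerate and the kink obstruction applies. Quantifying this---controlling the tangency order of a.e.\ line with $\partial^* F$ through the fine structure of the reduced boundary together with a Sard-type coarea estimate in the direction variable---is the technical crux of the proof.

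It remains to deduce convexity from the slice-connectivity property. Passing to the density-one representative $\tilde F$ (so that $| F \triangle \tilde F | = 0$), suppose $\tilde F$ is not convex: then there are density points $a, b \in \tilde F$ whose open segment contains a point $c$ of density zero for $F$. For directions $v$ in a small ball about $(b-a)/|b-a|$ and base points $u$ filling a set of positive $\mathcal H^{m-1}$-measure, the line $u + \RR v$ passes near $a$, near $b$ and near $c$; an interval slice containing the two density points $a, b$ would be forced to contain the mostly empty neighbourhood of $c$, which the density-zero condition at $c$ excludes for a positive fraction of such lines. This produces a positive-measure set of directions carrying positive-measure families of disconnected slices, contradicting the property established above. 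Hence $\tilde F$ is convex, i.e.\ $F$ is convex up to a set of measure zero, as claimed.
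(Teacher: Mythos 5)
First, a point of comparison: the survey does not prove this theorem at all. It is Figalli and Jerison's result, quoted from \cite{FJ}, and the only indication the paper gives of its proof is that the perimeter of $F$ is measured through the norm of the Sobolev space $H^{1/2}(\RR^m)$ --- a global, integrated argument in which the finite-perimeter hypothesis is essential. Your slicing-and-connectivity strategy is therefore a genuinely different route and must stand on its own. It does not, and the problem is not only the technical crux you yourself flag as unresolved (controlling degenerate tangencies of lines with $\partial^* F$; note that for a set of finite perimeter the reduced boundary is merely countably rectifiable, so notions such as ``tangency order'' or a ``Sard-type'' count of bad directions are not even defined, Sard-type theorems requiring smoothness that is absent here).

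The core flaw is that your kink obstruction only excludes births in which the new component has length $\ell(u) \to 0^+$. A new component can also appear by an existing component \emph{splitting} in two, with both pieces of positive length at the moment of birth, and such a split produces a \emph{downward} jump of the one-sided derivative of $W_v$, which is perfectly compatible with concavity of $\log W_v$. The planar annulus $\{1 \le |x| \le 3\}$ sliced vertically shows this: the slice is one interval for $1 \le |u| \le 3$ and two intervals for $|u| < 1$, and at $|u|=1$ the marginal $W_v(u) = 2\sqrt{9-u^2} - 2\sqrt{1-u^2}$ has a corner at which the derivative decreases (from $+\infty$ to a finite negative value), so no pointwise first-order contradiction with log-concavity arises. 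Log-concavity of this marginal does fail, as the theorem demands, but only through second-order, non-local effects ($W_v$ has a strict interior local minimum at $u=0$, which no concave or log-concave function can have), and these are invisible to a first-order kink argument. Consequently your conclusion that the number of components is locally constant in the interior of the support does not follow from what you established, and the downstream reasoning collapses: for the annulus the two components persist only over $|u|<1$, not over the whole support, and there they have \emph{identical} shadows --- in every direction, by symmetry --- which also refutes your auxiliary claim that disjoint portions of $F$ can share their shadows only for a negligible set of directions (a claim that is in any case hard to formulate, since the decomposition into portions depends on $v$). The lesson of the example is that the implication from log-concavity of marginals to convexity cannot be extracted from pointwise boundary analysis; some integrated mechanism, such as the $H^{1/2}$-norm identity used in \cite{FJ}, appears to be unavoidable.
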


We recall that $W_v$ is log-concave if it has the form $\E^{-V}$, where $V$ is a
convex function mapping $\{ x \in \RR^m : (x, v) = 0 \}$ to $\RR \cup \{+\infty\}$.
Thus this theorem improves Theorem~1.3, according to which $W_v$ is required to be
concave to guarantee that $F$ is convex. The converse of Theorem~3.3 is true as
well. Indeed, if $F$ is convex, then the Brunn--Minkowski inequality \cite{G}
implies that $W_v$ is concave which is even stronger than log-concave.

The proof of Theorem~3.3 is based on measuring the perimeter of a set through the
standard norm in the Sobolev space $H^{1/2} (\RR^m)$, and so the assumption that $F$
has finite perimeter is essential. However, the authors point out that the theorem
could be true without it. It is also worth noticing that the norm in $H^{1/2}
(\RR^m)$ is a bilinear form like the left-hand side in \eqref{s}.

\vspace{2mm}

\noindent {\bf Acknowledgement.} The author thanks Stefan Steinerberger, whose
comments about the original manuscript helped him to improve the presentation.

\vspace{-12mm}

\renewcommand{\refname}{
\begin{center}{\Large\bf References}
\end{center}}
\makeatletter
\renewcommand{\@biblabel}[1]{#1.\hfill}

\end{document}